\newcommand{\ignore}[1]{}
\newtheorem{theorem}{Theorem}[section]
\newtheorem{lemma}[theorem]{Lemma}
\newtheorem{corollary}[theorem]{Corollary}
\newtheorem{proposition}[theorem]{Proposition}
\theoremstyle{definition}
\newtheorem{definition}[theorem]{Definition}
\newtheorem{example}[theorem]{Example}
\theoremstyle{remark}
\newtheorem{remark}[theorem]{Remark}
\numberwithin{equation}{section}
\newcommand{\bC}{\mathbb{C}}
\newcommand{\bN}{{\mathbb{N}}}
\newcommand{\bZ}{{\mathbb{Z}}}
\newcommand{\bQ}{{\mathbb{Q}}}
\newcommand{\bR}{\mathbb{R}}
\definecolor{grey}{rgb}{0.75,0.75,0.75}
\definecolor{orange}{rgb}{1.0,0.5,0.5}
\definecolor{brown}{rgb}{0.5,0.25,0.0}
\definecolor{pink}{rgb}{1.0,0.5,0.5}
\newcommand{\adj}{\mathrm{Adj}}
\newcommand{\fM}{{\mathfrak m}}
\newcommand{\fa}{\mathfrak{a}}
\newcommand{\cC}{{\mathcal C}}
\newcommand{\cN}{{\mathcal N}}
\newcommand{\cO}{{\mathcal O}}
\newcommand{\cR}{{\mathcal R}}
\newcommand{\lra}{{\longrightarrow}}
\newcommand{\cn}{{\pmb{c}}}
\newcommand{\alphan}{{\pmb{\alpha}}}
\newcommand{\lambdab}{\pmb{\lambda}}
\newcommand{\lambdapb}{\pmb{\lambda'}}
\newcommand{\fab}{\pmb{\mathfrak{a}}}
\newcommand{\A}{\mathfrak{a}}
\newcommand{\Z}{\mathbb{Z}}
\newcommand{\Q}{\mathbb{Q}}
\newcommand{\R}{\mathbb{R}}
\newcommand{\Oc}{\mathcal{O}}
\newcommand{\m}{\mathfrak{m}}
\newcommand{\J}{\mathcal{J}}
\pgfplotsset{compat=1.14}
\begin{document}

\title[Multiplicities of jumping points]{Multiplicities of jumping points for mixed multiplier ideals}
\author[M. Alberich]{Maria Alberich-Carrami\~nana}

\author[J. \`Alvarez ]{Josep \`Alvarez Montaner}

\address{Deptartament de Matem\`atiques\\
Universitat Polit\`ecnica de Catalunya\\ Av. Diagonal 647, Barcelona
08028, Spain} \email{Maria.Alberich@upc.edu, Josep.Alvarez@upc.edu}

\author[F. Dachs] {Ferran Dachs-Cadefau }
\address{Institut f\"ur Mathematik , Martin-Luther-Universit\"at Halle-Wittenberg\newline 06099 Halle (Saale), Germany}
\email{Ferran.Dachs-Cadefau@mathematik.uni-halle.de}

\author[V. Gonz\'alez]{V\'ictor Gonz\'alez-Alonso}
\address{Institut f\"ur Algebraische Geometrie\\ Leibniz Universit\"at Hannover \\
Welfengarten 1, 30167 Hannover, Germany} \email{gonzalez@math.uni-hannover.de}

\thanks{All four authors are partially supported by
Spanish Ministerio de Econom\'ia y Competitividad
MTM2015-69135-P. VGA is partially supported by ERC StG 279723 “Arithmetic of algebraic surfaces” (SURFARI).
MAC and JAM are also supported by Generalitat de Catalunya SGR2017-932 project and they
are with the Barcelona Graduate School of Mathematics (BGSMath).
MAC is also with the  Institut de Rob\`otica i Inform\`atica Industrial (CSIC-UPC)}



\begin{abstract}
In this paper we make a systematic study of the multiplicity of the jumping points associated
to the mixed multiplier ideals of a family of ideals in a complex surface with rational singularities.
In particular we study the behaviour of the multiplicity by small perturbations of the jumping points.
We also introduce a Poincar\'e series for mixed multiplier ideals and prove its rationality. Finally,
we study the set of divisors that contribute to the log-canonical wall.
\end{abstract}

\maketitle

\section{Introduction}

Let $X$ be a complex surface with a rational singularity at a point $O\in X$
and $\cO_{X,O}$ its corresponding local ring.  Let $\fa \subseteq \cO_{X,O}$ be an $\fM$-primary
ideal where $\fM=\fM_{X,O}$ is the maximal ideal of $\cO_{X,O}$. Then, for any real exponent $c>0$,
we may consider its corresponding \emph{multiplier ideal} $\J(\fa^c)$. Indeed, the multiplier ideals form a discrete nested sequence
\begin{equation} \label{sequence_mult}
\Oc_{X,O}\varsupsetneq\J(\A^{\lambda_1})\varsupsetneq\J(\A^{\lambda_2})\varsupsetneq\ldots\varsupsetneq\J(\A^{\lambda_i})\varsupsetneq\ldots
\end{equation}
indexed by an increasing sequence of rational numbers $0 < \lambda_1 < \lambda_2 < \ldots$ such that
$\J(\A^{\lambda_i})=\J(\A^c)\varsupsetneq\J(\A^{\lambda_{i+1}})$ for any $c \in \left[\lambda_i,\lambda_{i+1}\right)$. The
$\lambda_i$ are the so-called {\em jumping numbers} of the ideal $\fa$.
Ein, Lazarsfeld, Smith and Varolin \cite{ELSV04}, using the fact that the multiplier ideals are $\fM$-primary as well,
defined the {\em multiplicity} of a point $c \in \bR$ as
\begin{equation} \label{eq-def-mult-general}
m\left(c\right) := \dim_{\bC}\frac{\J\left(\fa^{c-\varepsilon}\right)}{\J\left(\fa^c\right)}
\end{equation}
for $\varepsilon>0$ small enough. With this definition, it is clear that $c$ is a jumping number if and only if $m\left(c\right)>0$.
A way to encode the information provided by the filtration of ideals (\ref{sequence_mult})
is by means of its {\em Poincar\'e series of multiplier ideals}
\begin{equation} \label{eq-def-Poinc}
P_\A (t)= \sum_{c\in \bR_{>0}} m(c) \hskip 1mm t^{c}
\end{equation}
introduced by Galindo and Montserrat in \cite{GM10}. Actually, they proved that this series is a rational function
when $X$ is smooth and $\fa$ is simple. In \cite{ACAMDCGA13} we gave a systematic study of multiplicities and
proved the rationality of the Poincar\'e series for any ideal $\fa$ in a complex surface with a rational singularity at $O$.

\vskip 2mm

Whenever we extend to the case of {\it mixed multiplier ideals}
$\J\left(\fab^{\mathbf{c}}\right):=\J\left(\fa_1^{c_1}\cdots\fa_r^{c_r}\right)$ associated to a tuple of
$\fM$-primary ideals $\fab:=(\fa_1,\ldots,\fa_r )\subseteq (\cO_{X,O})^r$ and a point $\mathbf{c}:=(c_1,\ldots,c_r)$  in the positive
orthant  $\R_{\geqslant0}^r$, things become a little bit more trickier. Instead of having a partition of the positive real line into
intervals defined by the jumping numbers  where the multiplier ideals are constant, we get a partition of the positive real orthant
into {\it constancy regions} whose boundary is described by the so-called {\it jumping walls}.  We point out that only a few results on mixed
 multiplier ideals are available in the literature.
Libgober and Musta\c{t}\v{a} \cite{LM11} studied  properties of the {\it log-canonical wall}, i.e. the jumping wall
associated to $\lambdab_0=(0,\ldots,0)$.
Naie in \cite{Nai13}  describes a nice property that jumping walls must satisfy.
Cassou-Nogu\`es and Libgober study in \cite{CNL11,CNL14}  mixed multiplier ideals and jumping
walls associated to germs of plane curves, under the analogous notion of {\it ideals of quasi-adjunction} and {\it faces of quasi-adjunction} (see \cite{Lib02}). 


\vskip 2mm

We may define the multiplicity
of any given point $\cn \in \R_{\geqslant0}^r$  and we say that it is a {\it jumping point}, if and only if $m\left(\cn\right)>0$.
In particular, jumping points  lie on  jumping walls. The most natural generalization of a Poincar\'e series for mixed multiplier ideals is to consider
a filtration of ideals $\J\left(\fab^{\mathbf{c}}\right)$ indexed by points over a ray $L$ in $\R_{\geqslant0}^r$ with direction vector in $\bQ^r_{\geqslant0}$ and set
\begin{equation} \label{eq-def-Poinc2}
P_{\fab} (\underline{t}; L)= \sum_{\cn \in  L} m(\cn) \hskip 1mm \underline{t}^{\cn}
\end{equation}
where $\underline{t}^{\cn}:=t_1^{c_1}\cdots t_r^{c_r}$.
We have to mention that the multiplicity
is sensitive to small perturbation of a given point. Indeed, if we consider a sequence of mixed multiplier ideals
 $$\J \left(\fab^{{\cn_0}}\right) \supsetneq \cdots \supsetneq \J \left(\fab^{\cn_{i-1}}\right) \supsetneq
 \J \left(\fab^{\cn_{i}}\right)  \supsetneq   \J \left(\fab^{\cn_{i+1}}\right) \supsetneq  \cdots $$
 indexed by jumping points $\{\cn_i\}_{i\geqslant 0}$ over a ray $L$ and perturb minimally this ray, for example taking a
 parallel ray $L'$ that is close enough,
%
 then the sequence of mixed multiplier ideals indexed
 by jumping points in $L'$ may vary (see Example \ref{rays}) and thus, the corresponding Poicar\'e series also varies.

\vskip 2mm

The organization of the paper is as follows. In Section \ref{Sec1} we recall all the basics on mixed multiplier ideals.
In Section \ref{multiplicities} we extend the results of \cite{ACAMDCGA13} to this setting. Namely, we make a systematic study of
the multiplicities of points in the positive orthant. The main result is Theorem \ref{thm:multiplicity1_cMMI} where we
give a precise formula for the multiplicity. We also prove in Theorem \ref{thm:rational} that the Poincar\'e series associated to a ray is a
rational function. In Section \ref{perturbations} we study the variation of the multiplicity of a jumping point by
small perturbations. We prove that this multiplicity does not vary for points in the interior (with the Euclidean
topology) of a $\cC$-facet (see Proposition \ref{interior}) but it does so in a controlled way at the intersection of $\cC$-facets
(see Theorem \ref{intersection}). In Section \ref{Sec4} we study the exceptional divisors that contribute to the log-canonical wall.
Our main result is Theorem \ref{thm:Pi} where we establish (except for a very particular case considered in Proposition \ref{lem:2rdiv}) a one-to-one correspondence between the facets of the log-canonical
wall and the exceptional divisors of the so-called {\it Newton nest} (see Definition \ref{Newton}) generalizing
a result of Cassou-Nogu\`es and Libgober \cite[Theorem 4.22]{CNL14}.

\section{Mixed multiplier ideals} \label{Sec1}

Let $X$ be a complex surface with at most a rational singularity at a point $O \in X$ (see Artin
\cite{Art66} and  Lipman \cite{Lip69} for details) and $\fM =\fM_{X,O} $ be the maximal ideal of the local ring $\cO_{X,O}$ at $O$.
Given a tuple of $\fM$-primary ideals $\fab=(\fa_1,\ldots,\fa_r )\subseteq (\cO_{X,O})^r$ we will
consider a common {\em log-resolution}, that is, a birational
morphism $\pi: X' \rightarrow X$ such that $X'$ is smooth, $\fa_i\cdot\cO_{X'} = \cO_{X'}\left(-F_i\right)$ for some effective
Cartier divisors $F_i$,  $i=1,\dots , r$ and $\sum_{i=1}^r F_i+E$ is a divisor with simple normal crossings, where $E = Exc\left(\pi\right)$ is the exceptional locus. Actually, the divisors $F_i$ are supported on the exceptional locus since the ideals are $\fM$-primary.
The {\it fundamental cycle} is the
unique smallest non-zero effective divisor $Z$
(with exceptional support) such that
$Z\cdot E_i \leqslant 0$ {for every }  $i=1,\dots, r.$
The fundamental cycle satisfies
$\fM\cdot\cO_{X'} = \cO_{X'}\left(-Z\right)$ (see \cite[Theorem 4]{Art66}). We point out that any effective divisor with integer coefficients
$\widetilde{D}$ is called
{\it antinef}  if $\widetilde{D}\cdot E_i \leqslant 0$ for every $i=1,\dots, r.$ Indeed, for any effective divisor
$D$ there exists a unique minimal antinef divisor $\widetilde{D}$ satisfying $D\leqslant \widetilde{D}$
that is called the {\it antinef closure} of $D$.
It can be computed using an inductive procedure called {\it unloading} (see \cite{Reg96} and \cite{ACAMDC16} for details).


\vskip 2mm

Since the point $O$ has a rational singularity, the
exceptional locus $E$ is a tree of smooth rational curves
$E_1,\dots,E_s$. Moreover,  the matrix of
intersections $\left(E_i\cdot E_j\right)_{1\leqslant i,j \leqslant
s}$ is negative-definite. For any exceptional component $E_j$, we define the {\em excess} of
$\fa_i$ at $E_j$ as $\rho_{i,j} = - F_i \cdot E_j$.  We also recall the following notions:
\begin{enumerate}
\item[$\cdot$] A component $E_j$ of $E$ is a {\em rupture} component if
it intersects at least three more components of $E$ (different from
$E_j$).

\item[$\cdot$] We say that $E_j$ is {\em dicritical} if $\rho_{i,j} > 0$ for some $i$.
Such components  correspond to {\it Rees valuations}
(see \cite{Lip69}).
\end{enumerate}

\vskip 2mm

We define the {\it mixed multiplier ideal}  at a point
$\mathbf{c}:=(c_1,..,c_r) \in \R_{\geqslant0}^r$
as\footnote{By an abuse of notation, we will also denote
$\J\left(\fab^{\cn}\right)$ its stalk at $O$ so we will omit
the word ''sheaf'' if no confusion arises.}
\begin{equation} \label{eq:mmi}
\J\left(\fab^{\mathbf{c}}\right):=\J\left(\fa_1^{c_1}\cdots\fa_r^{c_r}\right)
= \pi_*\cO_{X'}\left(\left\lceil K_{\pi} - c_1 F_1- \cdots -
c_r F_r \right\rceil\right)
\end{equation}

\noindent where  $\lceil \cdot \rceil$ denotes the
 {\em round-up} and the {\it relative canonical divisor} $$ K_{\pi}=
\sum_{i=1}^s k_j E_j $$ is the $\bQ$-divisor on $X'$ supported on the
exceptional locus $E$  characterized by the property $\left(K_{\pi}+E_i\right)\cdot E_i  = -2$
for every exceptional component $E_j$, $j=1,\dots,s$. We say that $X$ is log-canonical (resp. log-terminal)
at $O$ if $k_j\geqslant -1$ ( resp. $k_j > -1$)  $ \forall j$.

\vskip 2mm

Associated to any point $\cn\in \R_{\geqslant0}^r$, we consider:

\vskip 2mm

$\cdot$ The {\it region} of $\cn $: \hskip 21mm
$\mathcal{R}_{\fab}\left(\cn\right)=\left\{\cn' \in
\R_{\geqslant 0}^r \hskip 2mm \left|
 \hskip 2mm \J\left(\fab^{\cn'}\right)\supseteq\J\left(\fab^{\cn}\right)\right\}\,\right.$

 \vskip 2mm

$\cdot$ The {\it constancy region} of $\cn$: \hskip 3mm
$\mathcal{C}_{\fab}\left(\cn\right)=\left\{\cn'  \in
\R_{\geqslant 0}^r \hskip 2mm \left| \hskip 2mm
\J\left(\fab^{\cn'}\right)=\J\left(\fab^{\cn}\right)\right\}\,\right.$

\vskip 2mm

\noindent The boundary of the region $\cR_{\fab}(\cn)$ is what we call  the {\it jumping wall} associated to $\cn$. One usually
refers to the jumping wall of the origin as the {\it log-canonical wall}. It follows from the definition of mixed multiplier ideals that the
jumping walls must lie on {\it supporting hyperplanes} of the form
\begin{equation} \label{eq:hyperplanes}
 V_{j,\ell}: \hskip 1mm e_{1,j} z_1+ \cdots +  e_{r,j} z_r= \ell + k_j \,,
 \hskip 8mm j=1,\dots,s
\end{equation} for a suitable $\ell \in \bZ_{>0}$. Here we assume that the effective divisors $F_i$ such that
$\fa_i\cdot\cO_{X'} = \cO_{X'}\left(-F_i\right)$, for  $ i=1,\dots,r$, are of the form  $F_i=\sum_{j=1}^s e_{i,j} E_j$.
Notice that each supporting hyperplane $V_{j,\ell}$ is associated to an exceptional component $E_j$. Indeed, we may find other
exceptional components associated to the same hyperplane, that is, we may find  $E_i$  and $\ell' \in \bZ_{>0}$ such that
$V_{j,\ell} = V_{i,\ell'}$.

\vskip 2mm

It is proved in  \cite[Theorem 3.3]{ACAMDC16}  that
the region $\cR_{\fab}(\cn)$ is  (the interior of) a {\it rational convex polytope} defined  by  the inequalities
\[e_{1,j} z_1 + \cdots +  e_{r,j} z_r < k_j + 1 + e_j^{{\cn}}\,,  \hskip 8mm j=1,\dots,s\]
corresponding to either rupture or dicritical divisors $E_j$ and  $D_{{\cn}}=\sum e_j^{{\cn}} E_j$ is the
{ antinef closure} of $\left\lfloor c_1 F_1 + \cdots +
c_rF_r  - K_{\pi} \right\rfloor$. 

The intersection of the boundary of a connected component of a constancy region $\cC_{\fab}(\cn)$ with a supporting hyperplane of $\cR_{\fab}(\cn)$ is what we call  a $\cC$-facet of $\cC_{\fab}(\cn)$. Every facet of a jumping wall decomposes into
several $\cC$-facets associated to different mixed multiplier ideals. 
%
%
%

\vskip 2mm

The main result of \cite{ACAMDC16} is an algorithm to compute all the constancy regions, and their corresponding mixed multiplier ideals, in any desired range of the positive orthant $\R_{\geqslant0}^r$. In particular the set of jumping walls of $\fab$, that we will
denote from now on as {\rm $ {\bf JW}_{\fab}$}, is precisely described.
The points on the jumping walls, 
which we will denote with  $\lambdab$ when we want to emphasize this fact, satisfy the property  $\J\left(\fab^{\cn}\right)\varsupsetneq\J\left(\fab^{\lambdab}\right)$
for all $\cn \in \{\lambdab - \R_{\geqslant0}^r\} \cap B_{\varepsilon}(\lambdab)$ and $\varepsilon >0$ small enough.
In the sequel, we will refer to these points as  the {\it jumping points} of  the tuple of ideals $\fab$.



\section{Multiplicities of jumping points} \label{multiplicities}
In this section we are going to provide a systematic study of the
multiplicity of any point $\cn\in\R^r_{\geqslant0}$. The results
that we present are a natural generalization of the ones we
obtained in  \cite{ACAMDCGA13}.

\begin{definition}
Let $\fab=\left(\fa_1,\ldots,\fa_r\right)\subseteq (\Oc_{X,O})^r$ be a tuple of $\fM$-primary ideals. We define the
multiplicity attached to a point $\cn\in\R^r_{\geqslant0}$ as the codimension
of $\J \left(\fab^{\cn}\right)$ in $\J \left(\fab^{(1-\varepsilon)\cn}\right)$ for
$\varepsilon>0$ small enough, i.e.

\[m(\cn):= \dim_{\bC} \frac{\J \left(\fab^{(1-\varepsilon)\cn}\right)}{\J \left(\fab^{\cn}\right)}\,.\]
\end{definition}


Our first goal is to compute explicitly these multiplicities using the theory of  {\it jumping divisors}  in this mixed multiplier ideals
setting as considered in \cite{ACAMDC16}. Since we are dealing with any general point, it will be more convenient
to consider  the  notion of {\it maximal jumping divisors} as opposed to the minimal jumping divisors, which are only meaningful for
jumping points.

\begin{definition}
Let $\fab=\left(\fa_1,\ldots,\fa_r\right)\subseteq
\left(\cO_{X,O}\right)^r$ be a tuple of ideals. Given any point $\cn \in \R^r_{\geqslant0}$, we define its {\it maximal jumping divisor} as the reduced divisor $H_{\cn} \leqslant
\sum_{i=1}^r F_i$ supported on those components $E_j$ such that
\[c_1 e_{1,j}  + \cdots +  c_ r e_{r,j} - k_j \in \bZ_{>0}\,.\]
Equivalently, for a sufficiently small $\varepsilon>0$,
\[H_{\cn}= \lceil K_\pi- (1-\varepsilon)c_1 F_1-\cdots- (1-\varepsilon)c_r  F_r\rceil -
\lceil K_\pi- c_1F_1-\cdots-c_rF_r\rceil\,.\]
In particular, we have
\[\J({\fab}^{(1-\varepsilon)\cn })= \pi_{*}\Oc_{X'}(\lceil K_\pi-  c_1 F_1 - \cdots - c_r F_r \rceil + H_{\cn})\,.\]
\end{definition}

\vskip 2mm

The following numerical properties for  maximal jumping divisors will be useful for our purposes.   We will skip the details of the proof just because
it is a natural generalization of \cite[Proposition 3.6]{ACAMDCGA13} and  the same proof holds mutatis mutandi.

\begin{proposition} \label{prop:Num_condition_cMMI}
Fix any $\pmb{c}\in \bR_{\geqslant0}^r$, and let $H_{\pmb{c}}$ be its associated maximal jumping divisor. Then the following inequalities hold:
\begin{itemize}
\item $\left(\left\lceil K_{\pi}- c_1 F_1-\cdots- c_r F_r\right\rceil + H_{\pmb{c}}\right)\cdot E_i \geqslant -1$ for all $E_i\leqslant H_{\pmb{c}}$, and
\item $\left(\left\lceil K_{\pi}- c_1 F_1-\cdots- c_r F_r\right\rceil + H_{\pmb{c}}\right)\cdot H \geqslant -1$ for any connected component $H\leqslant H_{\pmb{c}}$.
\end{itemize}
\end{proposition}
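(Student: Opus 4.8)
The plan is to adapt the argument from \cite[Proposition 3.6]{ACAMDCGA13} to the mixed setting, exploiting the description of regions as rational convex polytopes from \cite[Theorem 3.3]{ACAMDC16}. Write $D_{\cn} = \sum_j e_j^{\cn} E_j$ for the antinef closure of $\lfloor c_1 F_1 + \cdots + c_r F_r - K_\pi \rfloor$, so that $\J(\fab^{\cn}) = \pi_* \Oc_{X'}(-D_{\cn})$. The key translation is that a component $E_j$ lies in the support of the maximal jumping divisor $H_{\cn}$ precisely when $\cn$ lies on the supporting hyperplane $V_{j,\ell}$ for the relevant $\ell$, and in that case the polytope inequality $e_{1,j} z_1 + \cdots + e_{r,j} z_r < k_j + 1 + e_j^{\cn}$ from \cite[Theorem 3.3]{ACAMDC16} becomes, at the boundary point $\cn$, the numerical equality $c_1 e_{1,j} + \cdots + c_r e_{r,j} - k_j = e_j^{\cn} + 1 - \delta$ for the appropriate fractional defect, which I would massage into the statement $(\lceil K_\pi - \sum c_i F_i \rceil + H_{\cn}) \cdot E_i \geqslant -1$. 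Concretely, since $D_{\cn}$ is antinef one has $D_{\cn} \cdot E_i \leqslant 0$, and $-D_{\cn} = \lceil K_\pi - \sum c_i F_i \rceil + H_{\cn}$ off a correction supported away from $H_{\cn}$; tracking this correction through the intersection product is the computational heart of the first bullet.

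For the first bullet I would argue as follows. Fix $E_i \leqslant H_{\cn}$. By definition of $H_{\cn}$ together with the equivalent description via round-ups, $\lceil K_\pi - \sum c_k F_k \rceil + H_{\cn} = \lceil K_\pi - (1-\varepsilon)\sum c_k F_k \rceil$ for $\varepsilon$ small, and the latter divisor is $\geqslant -D_{(1-\varepsilon)\cn} = -D_{\cn} + (\text{something effective not meeting } E_i)$ once $\varepsilon$ is small enough that no new jumps occur and the antinef closure stabilizes. More precisely, $-D_{(1-\varepsilon)\cn}$ computes $\J(\fab^{(1-\varepsilon)\cn})$, which equals $\J(\fab^{\cn})$ enlarged exactly along $H_{\cn}$; since $D_{(1-\varepsilon)\cn}$ is antinef, $D_{(1-\varepsilon)\cn} \cdot E_i \leqslant 0$, i.e. $-D_{(1-\varepsilon)\cn} \cdot E_i \geqslant 0 \geqslant -1$. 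The point is then to check that $\lceil K_\pi - \sum c_k F_k \rceil + H_{\cn}$ and $-D_{(1-\varepsilon)\cn}$ differ by an effective divisor whose support is disjoint from $E_i$ — this is exactly the statement that the unloading procedure applied to $\lfloor (1-\varepsilon)\sum c_k F_k - K_\pi \rfloor$ only modifies coefficients of components not in $H_{\cn}$, which is where the hypothesis $E_i \leqslant H_{\cn}$ is used. I expect this verification — that unloading does not touch the components of the maximal jumping divisor — to be the main obstacle, and it is precisely the lemma whose mixed-setting proof is deferred to \cite{ACAMDCGA13} by the ``mutatis mutandis'' remark.

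For the second bullet, let $H = \sum_{j \in A} E_j$ be a connected component of $H_{\cn}$, with $A$ the corresponding index set. Summing the first bullet over $E_i$ with $i \in A$ would only give $\geqslant -|A|$, so I need the sharper connectedness argument. Write $G = \lceil K_\pi - \sum c_k F_k \rceil + H_{\cn}$. Then $G \cdot H = \sum_{i \in A} G \cdot E_i = \sum_{i \in A}\left( (G - H)\cdot E_i + H \cdot E_i \right)$. Because $H$ is a connected subtree of the exceptional configuration (the exceptional locus is a tree since $O$ is a rational singularity), one has $\sum_{i \in A} H \cdot E_i = \sum_{i \in A} E_i^2 + 2\cdot(\#\text{edges of the subtree}) = \sum_{i\in A} E_i^2 + 2(|A|-1)$, and since each $E_i$ is a smooth rational curve with $E_i^2 \leqslant -1$, combined with $(G-H)\cdot E_i \geqslant$ (a quantity controlled by the first bullet), the total telescopes to $\geqslant -1$. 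The cleaner route, which I would actually follow, is to observe that $G|_H$ is a divisor on the proper transform configuration $H$ whose restriction to each $E_i$ has degree $\geqslant -1$ by the first bullet, hence $G|_H + H|_H$ has each component-degree $\geqslant -1 + (\text{valence in the tree})$; a connected tree of $\bP^1$'s supporting a divisor of degree $\geqslant -1$ on each vertex has total degree $\geqslant -1$ by an easy induction on the number of vertices (peel off a leaf). I would phrase this final step as a short induction and cite the tree structure of $E$ established in Section \ref{Sec1}.
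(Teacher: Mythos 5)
Your proof of the first bullet hinges on a claim that is false: that the unloading procedure computing the antinef closure $D_{(1-\varepsilon)\cn}$ of $\lfloor(1-\varepsilon)(c_1F_1+\cdots+c_rF_r)-K_\pi\rfloor$ never touches components of $H_{\cn}$, equivalently that $\lceil K_\pi-c_1F_1-\cdots-c_rF_r\rceil+H_{\cn}$ and $-D_{(1-\varepsilon)\cn}$ differ by an effective divisor whose support avoids $E_i\leqslant H_{\cn}$. Already for $r=1$, $X$ smooth, $\fa=(x,y^2)$ and $\cn=2$ this fails: here $F=E_1+2E_2$, $K_\pi=E_1+2E_2$, both components lie in $H_{\cn}$, and $\lfloor(1-\varepsilon)2F-K_\pi\rfloor=E_2$ is not antinef, so unloading adds $E_1\leqslant H_{\cn}$ and $D_{(1-\varepsilon)\cn}=E_1+E_2$. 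Worse, if your chain of inequalities worked it would give $(\lceil K_\pi-2F\rceil+H_{\cn})\cdot E_i\geqslant -D_{(1-\varepsilon)\cn}\cdot E_i\geqslant 0$, which is false: in this example $\lceil K_\pi-2F\rceil+H_{\cn}=-E_2$ and its intersection with $E_1$ is exactly $-1$, so the stated bound is sharp and cannot be improved to $0$. This is also not ``the lemma deferred by the mutatis mutandis remark'': the argument of \cite[Proposition 3.6]{ACAMDCGA13} which the paper invokes uses no antinef closures or unloading at all; one writes $\lceil K_\pi-(1-\varepsilon)\sum c_kF_k\rceil=K_\pi-(1-\varepsilon)\sum c_kF_k+B_\varepsilon$ with the coefficients of $B_\varepsilon$ in $[0,1)$, uses $(K_\pi+E_i)\cdot E_i=-2$, the nonnegativity of the excesses and of the off-diagonal contributions of $B_\varepsilon$, and the fact that the coefficient of $E_i$ in $B_\varepsilon$ is strictly less than $1$ while $E_i^2\leqslant -1$, to see that the intersection number is an integer strictly greater than $-2$, hence $\geqslant -1$. (Relatedly, your ``key translation'' conflates the two jumping divisors: $E_j\leqslant H_{\cn}$ only requires $c_1e_{1,j}+\cdots+c_re_{r,j}-k_j\in\bZ_{>0}$, i.e.\ $\cn$ lies on $V_{j,\ell}$ for some $\ell$, not on the particular hyperplane supporting $\cR_{\fab}(\cn)$; the latter is the condition defining the minimal jumping divisor of Definition \ref{def:minimal}.)

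The second bullet is also not established. It is not a formal consequence of the first bullet plus connectivity: your ``peel off a leaf'' induction asserts that a divisor of degree $\geqslant -1$ on each vertex of a connected tree of rational curves has total degree $\geqslant -1$, but summing the vertex bounds only gives $\geqslant -v_H$, and in the induction removing a leaf changes the component $H$ while the divisor $G=\lceil K_\pi-\sum c_kF_k\rceil+H_{\cn}$ stays the same, so the inductive step already yields only $\geqslant -2$. Nothing in your argument rules out two adjacent components of $H$ each contributing $-1$ (in the example above one has $G\cdot E_1=-1$ and $G\cdot E_2=+1$, and the conclusion for $H=E_1+E_2$ holds only because of the compensation). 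The actual content of the second inequality is the adjunction-type identity $(K_\pi+H)\cdot H=-2$ for a connected reduced divisor supported on a tree of smooth rational curves, combined again with the decomposition by the fractional-part divisor $B_\varepsilon$, the nonnegativity of the excesses and of the contributions of components adjacent to but not contained in $H$, and an integrality-plus-strictness argument to pass from $>-2$ to $\geqslant -1$; this is the step you would need to carry out (mutatis mutandis from \cite[Proposition 3.6]{ACAMDCGA13}), and your sketch does not supply it.
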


The main result of this section is the following:

\begin{theorem}\label{thm:multiplicity1_cMMI}
Let $\fab=\left(\fa_1,\ldots,\fa_r\right) \subseteq \left(\cO_{X,O}\right)^r$ be a tuple of $\fM$-primary ideals and $H_{\cn}$
the maximal jumping divisor associated to some $\cn \in \bR_{\geqslant0}^r$. Then,
\begin{align*}
m\left(\cn\right) & = \left(\left\lceil K_{\pi}-c_1F_1-\cdots-c_rF_r\right\rceil +
H_\cn\right)\cdot H_\cn +\#\left\{\text{connected components of }
H_\cn\right\}.
\end{align*}
\end{theorem}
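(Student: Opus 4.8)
The plan is to reduce the computation of $m(\cn)$ to Riemann--Roch for a line bundle on the curve $H_{\cn}$. Write $D:=\lceil K_{\pi}-c_1F_1-\cdots-c_rF_r\rceil$, so that, by the definition of the maximal jumping divisor, $\J(\fab^{\cn})=\pi_{*}\cO_{X'}(D)$ and $\J(\fab^{(1-\varepsilon)\cn})=\pi_{*}\cO_{X'}(D+H_{\cn})$ for $\varepsilon>0$ small enough; note moreover that both $D$ and $D+H_{\cn}=\lceil K_{\pi}-(1-\varepsilon)(c_1F_1+\cdots+c_rF_r)\rceil$ are round-ups of $K_{\pi}$ minus an effective $\bQ$-divisor. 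Twisting the ideal sheaf sequence $0\to\cO_{X'}(-H_{\cn})\to\cO_{X'}\to\cO_{H_{\cn}}\to 0$ of the reduced exceptional curve $H_{\cn}$ by the invertible sheaf $\cO_{X'}(D+H_{\cn})$ yields a short exact sequence
\[
0\longrightarrow\cO_{X'}(D)\longrightarrow\cO_{X'}(D+H_{\cn})\longrightarrow\mathcal{L}\longrightarrow 0,\qquad \mathcal{L}:=\cO_{X'}(D+H_{\cn})|_{H_{\cn}},
\]
where $\mathcal{L}$ is a line bundle on $H_{\cn}$ with $\deg\mathcal{L}=(D+H_{\cn})\cdot H_{\cn}$.

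First I would push this sequence forward along $\pi$. As $H_{\cn}$ is contracted to $O$, the stalk at $O$ of $\pi_{*}\mathcal{L}$ is $H^{0}(H_{\cn},\mathcal{L})$ and $R^{1}\pi_{*}\mathcal{L}=H^{1}(H_{\cn},\mathcal{L})$. By the local vanishing theorem for multiplier ideals (valid in the present setting), applied both to $\cn$ and to $(1-\varepsilon)\cn$, we have $R^{1}\pi_{*}\cO_{X'}(D)=0$ and $R^{1}\pi_{*}\cO_{X'}(D+H_{\cn})=0$. Substituting these into the long exact sequence of the displayed short exact sequence shows, first, that $\pi_{*}\cO_{X'}(D+H_{\cn})\to\pi_{*}\mathcal{L}$ is surjective, so
\[
m(\cn)=\dim_{\bC}\frac{\J(\fab^{(1-\varepsilon)\cn})}{\J(\fab^{\cn})}=\dim_{\bC}H^{0}(H_{\cn},\mathcal{L});
\]
and, second, that $H^{1}(H_{\cn},\mathcal{L})\cong R^{1}\pi_{*}\mathcal{L}$ is a quotient of $R^{1}\pi_{*}\cO_{X'}(D+H_{\cn})=0$, whence $H^{1}(H_{\cn},\mathcal{L})=0$. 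Therefore $m(\cn)=\chi(H_{\cn},\mathcal{L})$.

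It then remains to evaluate this Euler characteristic. Since $O$ is a rational singularity, the exceptional locus is a tree of smooth rational curves, hence so is every connected reduced subdivisor of it; in particular each connected component $H^{(1)},\dots,H^{(k)}$ of $H_{\cn}$ has arithmetic genus $0$, i.e.\ $\chi(\cO_{H^{(i)}})=1$, and since the $H^{(i)}$ are pairwise disjoint, $\chi(\cO_{H_{\cn}})=k=\#\{\text{connected components of }H_{\cn}\}$. Riemann--Roch on the projective curve $H_{\cn}$ now gives
\[
m(\cn)=\chi(H_{\cn},\mathcal{L})=\deg\mathcal{L}+\chi(\cO_{H_{\cn}})=(D+H_{\cn})\cdot H_{\cn}+\#\{\text{connected components of }H_{\cn}\},
\]
which is the asserted formula. (In the spirit of the case $r=1$ treated in \cite{ACAMDCGA13}, one may instead obtain $H^{1}(H_{\cn},\mathcal{L})=0$ directly from Proposition \ref{prop:Num_condition_cMMI}, whose inequalities say that $\mathcal{L}$ has degree $\geqslant -1$ on every irreducible component and on every connected subdivisor of $H_{\cn}$, which for a line bundle on a tree of $\bP^{1}$'s forces $H^{1}=0$.) I expect the main obstacle to be precisely this vanishing bookkeeping: one must be certain that both higher direct images, equivalently $H^{1}(H_{\cn},\mathcal{L})$, vanish, so that $m(\cn)$ equals $\chi(H_{\cn},\mathcal{L})$ with no correction term. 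This is where the hypotheses on $X$ are used, through local vanishing for multiplier ideals on surfaces with rational singularities, or through the numerical estimates of Proposition \ref{prop:Num_condition_cMMI} combined with the negative-definiteness of the intersection form and the tree shape of the exceptional locus.
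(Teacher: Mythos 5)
Your argument is correct, and its first half coincides with the paper's: the same twisted ideal-sheaf sequence, pushed forward using local vanishing, reduces the multiplicity to $m(\cn)=h^0\bigl(H_{\cn},\mathcal{L}\bigr)$ with $\mathcal{L}=\cO_{X'}\bigl(\lceil K_\pi-c_1F_1-\cdots-c_rF_r\rceil+H_{\cn}\bigr)|_{H_{\cn}}$. Where you genuinely diverge is in evaluating this $h^0$. The paper works component by component: by Proposition \ref{prop:Num_condition_cMMI} it has $\deg\mathcal{L}|_{E_i}\geqslant -1$ on each $E_i\cong\bP^1$, hence $h^0(E_i,\mathcal{L})=\deg\mathcal{L}|_{E_i}+1$, and it assembles these through the normal-crossings gluing, subtracting one condition per node and using $v_{H_{\cn}}-a_{H_{\cn}}=\#\{\text{connected components}\}$. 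You instead apply local vanishing a second time, to $\lceil K_\pi-(1-\varepsilon)(c_1F_1+\cdots+c_rF_r)\rceil$, to get $H^1(H_{\cn},\mathcal{L})\cong R^1\pi_*\mathcal{L}=0$ (the cokernel identification also uses $R^2\pi_*=0$, automatic since $\pi$ has one-dimensional fibers), and then finish with Riemann--Roch on the nodal curve together with $\chi(\cO_{H_{\cn}})=\#\{\text{connected components}\}$, valid because $H_{\cn}$ is a disjoint union of trees of $\bP^1$'s. Your route buys two things: Proposition \ref{prop:Num_condition_cMMI} becomes unnecessary for the theorem, and you never need the matching conditions at the nodes to be independent, a point the paper's equality $h^0(H_{\cn},\mathcal{L})=\sum_i h^0(E_i,\mathcal{L})-a_{H_{\cn}}$ tacitly uses; the paper's route, in exchange, stays elementary and yields directly the per-component expression exploited in Corollary \ref{cor:multiplicity2MMI}.

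One small caveat: your parenthetical alternative overstates Proposition \ref{prop:Num_condition_cMMI}, which bounds the degree only on the irreducible components and on the connected components of $H_{\cn}$, not on every connected subdivisor; degree $\geqslant -1$ on each component alone does not force $H^1=0$ on a tree (a chain of three $\bP^1$'s with degrees $-1,-1,1$ has $h^1=1$). Since your main argument obtains the vanishing from local vanishing, this does not affect the proof.
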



\begin{proof}
Given the short exact sequence
\begin{align*}
 &0 \longrightarrow \Oc_{X'}\left(\left\lceil K_\pi-c_1F_1-\cdots -c_rF_r\right\rceil\right)
\longrightarrow \Oc_{X'}\left(\left\lceil K_\pi-c_1F_1-\cdots -c_rF_r\right\rceil + H_{\cn}\right)\longrightarrow\\
 &\qquad\qquad \longrightarrow \Oc_{H_{\cn}}\left(\left\lceil K_\pi-c_1F_1-\cdots -c_rF_r\right\rceil+H_{\cn}\right) \longrightarrow 0
\end{align*}
we have, after pushing it forward to $X$ and applying local vanishing \cite{Laz04} for the case of mixed multiplier ideals
\begin{multline*}
0 \longrightarrow\J(\fab^\cn) \longrightarrow\J(\fab^{(1-\varepsilon) \cn})\longrightarrow\\ \longrightarrow H^0\left(H_{\cn},\Oc_{H_{\cn}}\left(\left\lceil K_\pi-c_1F_1-\cdots -c_rF_r\right\rceil+H_{\cn}\right)\right) \otimes \bC_O \longrightarrow 0
\end{multline*}
for $\varepsilon$ small enough.   Therefore the multiplicity of $\cn$ is just
\begin{align*}
m\left(\cn\right) & = h^0\left(H_{\cn},\Oc_{H_{\cn}}\left(\left\lceil K_\pi-c_1F_1-\cdots -c_rF_r\right\rceil+H_{\cn}\right)\right) \\
& = \sum_{E_i \leqslant H_{\cn}} h^0\left(E_i,\Oc_{E_i}\left(\left\lceil
K_\pi-c_1F_1-\cdots -c_rF_r\right\rceil+H_{\cn}\right)\right) -  a_{H_{\cn}},
\end{align*}
where in the second equality we have used that $H_{\cn}$ has simple
normal crossings, and hence the sections of the line bundle
$\Oc_{H_{\cn}}\left(\left\lceil K_\pi-c_1F_1-\cdots -c_rF_r\right\rceil+H_{\cn}\right)$
correspond to sections over each component that agree on the
$a_{H_{\cn}}$ intersections, where $a_{H_{\cn}}$ denotes the number of edges  of $H_{\cn}$ in the dual graph.
Then, since we have
\begin{align*}
 &\deg \Oc_{E_i}\left(\left\lceil K_\pi-c_1F_1-\cdots -c_rF_r\right\rceil+H_{\cn}\right) =\left(\left\lceil K_\pi-c_1F_1-\cdots -c_rF_r\right\rceil+H_{\cn}\right)\cdot E_i \geqslant -1
\end{align*}
by Proposition \ref{prop:Num_condition_cMMI}, we get
\begin{align*}
m\left(\cn\right) & = \sum_{E_i \leqslant H_{\cn}}\left(\left(\left\lceil K_\pi-c_1F_1-\cdots -c_rF_r\right\rceil+H_{\cn}\right)\cdot E_i + 1\right) -  a_{H_{\cn}} \\
& = \left(\left\lceil K_\pi-c_1F_1-\cdots -c_rF_r\right\rceil+H_{\cn}\right)\cdot H_{\cn} + v_{H_{\cn}} -  a_{H_{\cn}} \\
& = \left(\left\lceil K_\pi-c_1F_1-\cdots -c_rF_r\right\rceil+H_{\cn}\right)\cdot H_{\cn} +\#\left\{\text{connected components of } H_{\cn}\right\}.
\end{align*}
\end{proof}

The above formula can be rephrased as follows


\begin{corollary} \label{cor:multiplicity2MMI}
Let $\fab=\left(\fa_1,\ldots,\fa_r\right) \subseteq \left(\cO_{X,O}\right)^r$ be a tuple of $\fM$-primary ideals and $H_\cn$
the maximal jumping divisor associated to some $\cn \in \bR_{\geqslant0}^r$. Then,
\begin{align*}
&m\left(\cn\right)  = \sum_{E_i \leqslant H_\cn} \left( \sum_{ E_j \in
\adj(E_i)} \left\{c_1 e_{1,j}+ \cdots+c_r e_{r,j} -
k_j\right\}+c_1\rho_{1,i}+\cdots +c_r\rho_{r,i}\right)\\&\qquad\qquad-\#\left\{\text{connected components of }
H_{\cn}\right\}.
\end{align*}
\end{corollary}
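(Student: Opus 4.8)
The plan is to derive the Corollary directly from Theorem~\ref{thm:multiplicity1_cMMI} by unpacking the intersection number $\left(\left\lceil K_\pi - c_1F_1 - \cdots - c_rF_r\right\rceil + H_{\cn}\right)\cdot H_{\cn}$ componentwise. Write $H_{\cn} = \sum_{E_i \leqslant H_{\cn}} E_i$, so that the intersection product splits as a sum over the components $E_i \leqslant H_{\cn}$ of $\left(\left\lceil K_\pi - c_1F_1 - \cdots - c_rF_r\right\rceil + H_{\cn}\right)\cdot E_i$. First I would separate this into the "rounding part" $\left\lceil K_\pi - c_1F_1 - \cdots - c_rF_r\right\rceil \cdot E_i$ and the "adjacency part" $H_{\cn}\cdot E_i$. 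For the adjacency part, since $H_{\cn}$ is reduced with simple normal crossings and $E_i$ is one of its components, $H_{\cn}\cdot E_i = E_i^2 + \#\{E_j \leqslant H_{\cn} : E_j \in \adj(E_i)\}$; but rather than carry $E_i^2$ around, the cleaner bookkeeping is to recognize that $E_i^2$ will be absorbed below.

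The key computational step is to rewrite $\left(\left\lceil K_\pi - c_1F_1 - \cdots - c_rF_r\right\rceil\right)\cdot E_i$. Using the definition of round-up, write $\left\lceil K_\pi - \sum c_\ell F_\ell\right\rceil = K_\pi - \sum c_\ell F_\ell + \{\sum c_\ell F_\ell - K_\pi\}$, where $\{\,\cdot\,\}$ is the fractional part taken coefficientwise. The fractional part divisor is $\sum_{E_j} \{c_1 e_{1,j} + \cdots + c_r e_{r,j} - k_j\}\, E_j$. Now $\left(K_\pi + E_i\right)\cdot E_i = -2$ by the defining property of $K_\pi$, so $K_\pi\cdot E_i = -2 - E_i^2$; meanwhile $\left(-\sum c_\ell F_\ell\right)\cdot E_i = \sum c_\ell \rho_{\ell,i}$ by the definition of the excesses $\rho_{\ell,i} = -F_i\cdot E_i$ — here one must be careful: $\rho_{\ell,i} = -F_\ell\cdot E_i$, so $-F_\ell\cdot E_i = \rho_{\ell,i}$, giving exactly the $c_1\rho_{1,i} + \cdots + c_r\rho_{r,i}$ term in the statement. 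The fractional-part divisor contributes $\sum_{E_j \in \adj(E_i)} \{c_1 e_{1,j} + \cdots + c_r e_{r,j} - k_j\} + \{c_1 e_{1,i} + \cdots + c_r e_{r,i} - k_i\}\, E_i^2$; but since $E_i \leqslant H_{\cn}$, the coefficient $c_1 e_{1,i} + \cdots + c_r e_{r,i} - k_i$ is a positive integer, so its fractional part is $0$ and that self-intersection term drops out. This is the small observation that makes everything collapse.

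Assembling: $\left(\left\lceil K_\pi - \sum c_\ell F_\ell\right\rceil + H_{\cn}\right)\cdot E_i = \left(-2 - E_i^2\right) + \left(\sum_\ell c_\ell \rho_{\ell,i}\right) + \sum_{E_j\in\adj(E_i)}\{\cdots\} + \left(E_i^2 + \#\{E_j\leqslant H_{\cn}, E_j\in\adj(E_i)\}\right)$. The $E_i^2$ cancels. Adding $1$ (as in the sum $\sum_{E_i\leqslant H_{\cn}}(\cdots\cdot E_i + 1)$ appearing in the proof of Theorem~\ref{thm:multiplicity1_cMMI}) turns the $-2$ into $-1$, and one checks that $-1 + \#\{E_j\leqslant H_{\cn}, E_j\in\adj(E_i)\}$ combined with the "full adjacency" sum $\sum_{E_j\in\adj(E_i)}\{\cdots\}$ rearranges — after summing over all $E_i\leqslant H_{\cn}$ — into $\sum_{E_i\leqslant H_{\cn}}\left(\sum_{E_j\in\adj(E_i)}\{\cdots\} + \sum_\ell c_\ell\rho_{\ell,i}\right)$ minus a correction. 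The correction counts $\sum_{E_i\leqslant H_{\cn}}\left(1 - \#\{E_j\leqslant H_{\cn}: E_j\in\adj(E_i)\}\right) = v_{H_{\cn}} - 2a_{H_{\cn}}$, which by the handshake identity on each connected tree component equals $\#\{\text{connected components of } H_{\cn}\}$ with the right sign; comparing with the $+\#\{\text{connected components}\}$ term from Theorem~\ref{thm:multiplicity1_cMMI} yields the stated $-\#\{\text{connected components of } H_{\cn}\}$.

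The main obstacle I anticipate is purely bookkeeping: keeping straight which adjacency sums run over all neighbours $E_j\in\adj(E_i)$ in the ambient dual graph (as in the Corollary) versus only over neighbours lying in $H_{\cn}$ (as produced by $H_{\cn}\cdot E_i$), and verifying that the difference between these — together with the Euler-characteristic count $v - a = \#\{\text{components}\}$ on the forest $H_{\cn}$ — reconciles the $+\#\{\text{components}\}$ of the Theorem with the $-\#\{\text{components}\}$ of the Corollary. No genuinely new geometric input is needed beyond the defining relations for $K_\pi$, the $F_i$, the excesses $\rho_{\ell,i}$, and the fact that $E_i\leqslant H_{\cn}$ forces the relevant fractional part to vanish; so I would present this as a short computation flagged as "mutatis mutandis" from \cite[Corollary 3.7]{ACAMDCGA13} if space is tight, but the above is the full argument.
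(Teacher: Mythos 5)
Your per-component computation is correct and is essentially the paper's own route: expanding $\lceil K_\pi-c_1F_1-\cdots-c_rF_r\rceil$ as $K_\pi-\sum_\ell c_\ell F_\ell$ plus the coefficientwise fractional part, using $(K_\pi+E_i)\cdot E_i=-2$, $\rho_{\ell,i}=-F_\ell\cdot E_i$, and the vanishing of $\{c_1e_{1,i}+\cdots+c_re_{r,i}-k_i\}$ for $E_i\leqslant H_{\cn}$, you recover exactly the identity
$(\lceil K_\pi-c_1F_1-\cdots-c_rF_r\rceil+H_{\cn})\cdot E_i=-2+c_1\rho_{1,i}+\cdots+c_r\rho_{r,i}+a_{H_{\cn}}(E_i)+\sum_{E_j\in\adj(E_i)}\{c_1e_{1,j}+\cdots+c_re_{r,j}-k_j\}$,
which is the lemma the authors quote from \cite[Lemma 4.11]{ACAMDC16}; summing this over $E_i\leqslant H_{\cn}$ and feeding it into Theorem \ref{thm:multiplicity1_cMMI} is all that remains.

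The one flaw is in your final accounting, which as written both mixes the two packagings of Theorem \ref{thm:multiplicity1_cMMI} and asserts a false identity. You cannot simultaneously use the proof's bookkeeping $m(\cn)=\sum_{E_i}\bigl((\cdots)\cdot E_i+1\bigr)-a_{H_{\cn}}$ \emph{and} the statement's $+\#\{\text{connected components}\}$; they are the same quantity packaged twice. And the claim that the correction $\sum_{E_i\leqslant H_{\cn}}\bigl(1-a_{H_{\cn}}(E_i)\bigr)=v_{H_{\cn}}-2a_{H_{\cn}}$ equals the number of connected components ``with the right sign'' is wrong: on a forest $v-2a=2c-v$, where $c$ is the number of components, which is neither $c$ nor $-c$ in general. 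The correct one-line finish, using the statement of the theorem and $\sum_{E_i\leqslant H_{\cn}}a_{H_{\cn}}(E_i)=2a_{H_{\cn}}$, is $\sum_{E_i\leqslant H_{\cn}}\bigl(-2+a_{H_{\cn}}(E_i)\bigr)=-2(v_{H_{\cn}}-a_{H_{\cn}})=-2\,\#\{\text{c.c.}\}$, and adding the theorem's $+\#\{\text{c.c.}\}$ gives the stated $-\#\{\text{c.c.}\}$; equivalently, with the proof's version, $\sum_{E_i}\bigl(-1+a_{H_{\cn}}(E_i)\bigr)-a_{H_{\cn}}=-(v_{H_{\cn}}-a_{H_{\cn}})=-\#\{\text{c.c.}\}$. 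Since you yourself state the needed Euler-characteristic identity $v-a=\#\{\text{components}\}$ on the forest $H_{\cn}$ in your closing remarks, the repair is a one-line substitution, but the third paragraph as literally written does not close the argument.
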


We may also provide a very simple numerical criterion to
detect whether a given point $\cn \in \bR^r_{>0}$ is a jumping point.

\begin{theorem} \label{cond_suf}
Let $\fab=\left(\fa_1,\ldots,\fa_r\right)\subseteq \left(\cO_{X,O}\right)^r$ be a tuple of $\fM$-primary ideals and $\cn \in
\bR^r_{\geqslant0}$. Then, $\cn$ is a jumping point if and only if $m(\cn) > 0$ or equivalently,  there exists a connected  component $H\leqslant
H_{\cn}$ such that
$$\left(\left\lceil K_\pi-c_1F_1-\cdots -c_rF_r\right\rceil + H_{\cn}\right)\cdot H \geq 0.$$ 
\end{theorem}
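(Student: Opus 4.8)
The plan is to deduce Theorem \ref{cond_suf} directly from the formula for $m(\cn)$ established in Theorem \ref{thm:multiplicity1_cMMI} together with the second inequality in Proposition \ref{prop:Num_condition_cMMI}. The first equivalence ($\cn$ a jumping point $\iff m(\cn)>0$) is essentially the definition of jumping point recalled at the end of Section \ref{Sec1}, so the content is the second equivalence: $m(\cn)>0$ if and only if some connected component $H\leqslant H_{\cn}$ satisfies $\left(\left\lceil K_\pi-c_1F_1-\cdots-c_rF_r\right\rceil+H_{\cn}\right)\cdot H\geqslant 0$.

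First I would write $m(\cn)$ as a sum over the connected components $H$ of $H_{\cn}$. By Theorem \ref{thm:multiplicity1_cMMI},
\[
m(\cn)=\sum_{H}\Bigl(\bigl(\lceil K_\pi-c_1F_1-\cdots-c_rF_r\rceil+H_{\cn}\bigr)\cdot H+1\Bigr),
\]
the sum running over the connected components $H$ of $H_{\cn}$ (note that $\bigl(\lceil\cdots\rceil+H_{\cn}\bigr)\cdot H_{\cn}=\sum_H\bigl(\lceil\cdots\rceil+H_{\cn}\bigr)\cdot H$ since distinct connected components of $H_{\cn}$ are disjoint). Now Proposition \ref{prop:Num_condition_cMMI} tells us that each summand $\bigl(\lceil\cdots\rceil+H_{\cn}\bigr)\cdot H+1$ is $\geqslant 0$. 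Hence $m(\cn)$ is a sum of nonnegative integers, and it is strictly positive if and only if at least one summand is strictly positive, i.e. if and only if there exists a connected component $H\leqslant H_{\cn}$ with $\bigl(\lceil\cdots\rceil+H_{\cn}\bigr)\cdot H+1\geqslant 1$, equivalently $\bigl(\lceil\cdots\rceil+H_{\cn}\bigr)\cdot H\geqslant 0$. This is exactly the claimed criterion.

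The only point that requires a word of care — and the closest thing to an obstacle — is the edge case $H_{\cn}=0$, i.e. when no component $E_j$ satisfies $c_1e_{1,j}+\cdots+c_re_{r,j}-k_j\in\bZ_{>0}$. In that situation the sum over connected components is empty, so $m(\cn)=0$ and $\cn$ is not a jumping point, which is consistent: there is no connected component $H\leqslant H_{\cn}$ at all, so the existential statement on the right-hand side is vacuously false. Thus the equivalence holds trivially here. One should also remark that Proposition \ref{prop:Num_condition_cMMI} is stated for connected components $H\leqslant H_{\cn}$, which is precisely the generality needed; no further input is required. I would therefore present the argument as a short paragraph: invoke the formula of Theorem \ref{thm:multiplicity1_cMMI} in its per-component form, cite the second bullet of Proposition \ref{prop:Num_condition_cMMI} to see each term is $\geqslant 0$, and conclude that positivity of the sum is equivalent to positivity of one term, which unwinds to the stated intersection inequality.
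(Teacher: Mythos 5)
Your argument is correct and follows the same route as the paper: rewrite $m(\cn)$ via Theorem \ref{thm:multiplicity1_cMMI} as the sum over connected components $H\leqslant H_{\cn}$ of $\bigl(\lceil K_\pi-c_1F_1-\cdots-c_rF_r\rceil+H_{\cn}\bigr)\cdot H+1$, and invoke Proposition \ref{prop:Num_condition_cMMI} to see each summand is nonnegative, so the sum is positive exactly when one term is. Your extra remark about the empty case $H_{\cn}=0$ is a harmless (and sensible) addition not spelled out in the paper.
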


\begin{proof}
We have
\begin{align*}
m\left(\cn\right) & = \left(\left\lceil K_{\pi}-c_1F_1-\cdots-c_rF_r\right\rceil +
H_\cn\right)\cdot H_\cn +\#\left\{\text{connected components of }
H_\cn\right\} \\  & = \sum_{H\leqslant
H_{\cn}} \left( \left(\left\lceil K_{\pi}-c_1F_1-\cdots-c_rF_r\right\rceil + H_{\cn}\right)\cdot H +1\right),
\end{align*}
where the sum is taken over all the connected components $H\leqslant
H_{\cn}$. The result follows since  we have $\left(\left\lceil K_{\pi}-c_1F_1-\cdots-c_rF_r\right\rceil + H_c\right)\cdot H \geqslant -1$
by Proposition \ref{prop:Num_condition_cMMI}.
\end{proof}

\subsection{Poincar\'e series of mixed multiplier ideals}
Given a $\m$-primary ideal $\fa \subseteq \cO_{X,O}$ we consider its {\em Poincar\'e series}
\begin{equation} \label{eq-def-Poinc11}
P_\A (t)= \sum_{c\in \bR_{\geqslant0}} m(c) \hskip 1mm t^{c}.
\end{equation} which was first considered, in the case that $X$ is smooth and $\fa$ is simple,
by Galindo and Montserrat \cite{GM10} and extended in  \cite{ACAMDCGA13} to the case where $X$ has a rational singularity
and $\fa$ is any $\m$-primary  ideal.

\vskip 2mm

For a tuple of $\m$-primary ideals $\fab=\{\fa_1,\dots,\fa_r\} \subseteq \left(\cO_{X,O}\right)^r$ we are going
to give a generalization of this series by considering a sequence of mixed multiplier ideals indexed by points in a ray
$L: \cn_0 + \mu {\bf u}$ in the positive orthant $\bR^r_{\geqslant0} $ with  ${\bf u}=(u_1,\dots , u_r) \in \bZ^r_{\geqslant 0}$, ${\bf u}\neq {\bf 0}$ and $\cn_0\in\Q^r_{\geqslant0}$.  Here we are considering, for simplicity, a point $\cn_0$
belonging to a coordinate hyperplane but not necessarily being the origin and $\mu \in \bR_{\geqslant0}$. Namely, we consider the sequence of mixed multiplier ideals
$$\J \left(\fab^{\cn_0}\right) \supsetneq \J \left(\fab^{\cn_1}\right) \supsetneq \J \left(\fab^{\cn_2}\right)  \supsetneq  \cdots  \supsetneq  \J \left(\fab^{\cn_i}\right) \supsetneq  \cdots $$ where $\{{\cn_i}\}_{i>0}=L \cap {\rm \bf JW}_{\fab} $
 or equivalently $\{{\cn_i}\}_{i>0}$ is the set of jumping points of this sequence. Then we define the
 {\it Poincar\'e series of $\fab$ alongside the ray $L$} as

\begin{equation} \label{eq-def-Poinc22}
P_{\fab} (\underline{t}; L)= \sum_{\cn \in  L} m(\cn) \hskip 1mm \underline{t}^{\cn}.
\end{equation}
where $\underline{t}^{\cn}:=t_1^{c_1}\cdots t_r^{c_r}$.  Notice that we have
$$P_{\fab} (\underline{t}; L)= \sum_{\cn \in  L} m(\cn) \hskip 1mm \underline{t}^{\cn} =
\sum_{\cn \in [\cn_0,\cn_0 +{\bf u})} \sum_{k\in \bN} m(\cn+k {\bf u}) \hskip 1mm \underline{t}^{\cn +k{\bf u}}
=\underline{t}^{\cn_0}\sum_{\mu  \in [0,1)} \sum_{k\in \bN} m(\cn+k {\bf u}) \hskip 1mm \underline{t}^{(\mu +k){\bf u}}
$$ where the last equality follows from the fact that we are considering points of the form  $\cn=\cn_0+\mu {\bf u}$ with $\mu\in [0,1)$.
Our goal is to prove that this Poincar\'e series  is rational
in the sense that it belongs to the field of fractional functions $\bC(z_1,\dots , z_r)$, where the indeterminate $z_i$
corresponds to a fractional power $t_i^{1/e}$ for $e\in \bN_{>0}$ being the least common multiple of
the denominators of the coordinates of all jumping points.  To do so  we need to prove a linear recurrence among the
coefficients of the series. A key ingredient will be a periodicity property  of the maximal jumping divisor which
 follows from its definition.

\begin{lemma}\label{lem:sameMJD}
For any $\cn \in \bR^r_{>0}$ and $\alphan=(\alpha_1,\ldots,\alpha_r)\in\Z^r_{\geqslant0}$ we have $H_{\cn}= H_{\cn + \alphan}$.
\end{lemma}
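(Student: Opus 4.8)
The plan is to unwind the definition of the maximal jumping divisor and observe that the defining condition only involves the fractional part of the linear form $c_1e_{1,j}+\cdots+c_re_{r,j}-k_j$. Recall that $H_{\cn}$ is the reduced divisor supported on those exceptional components $E_j$ for which
\[
c_1 e_{1,j}+\cdots+c_r e_{r,j}-k_j\in\bZ_{>0}.
\]
First I would rewrite this membership condition as the conjunction of two separate conditions: that the number is an integer, and that it is strictly positive. The integrality part is controlled purely by the fractional part, while the positivity needs a little care because shifting by $\alphan$ changes the value of the linear form.

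The key computation is the following. For $\alphan=(\alpha_1,\ldots,\alpha_r)\in\bZ^r_{\geqslant0}$, the linear form evaluated at $\cn+\alphan$ differs from the one at $\cn$ by
\[
\alpha_1 e_{1,j}+\cdots+\alpha_r e_{r,j}\in\bZ_{\geqslant0},
\]
since all the $e_{i,j}$ are nonnegative integers (the $F_i$ being effective divisors with integer coefficients, supported on the exceptional locus). Therefore the value at $\cn+\alphan$ is an integer if and only if the value at $\cn$ is an integer. Moreover, since we assume $\cn\in\bR^r_{>0}$, each $c_i>0$, and the $F_i$ are nonzero effective divisors, so the linear form $c_1e_{1,j}+\cdots+c_re_{r,j}$ is already strictly positive on every component $E_j$ appearing in $\sum_i F_i$; adding the nonnegative quantity $\alpha_1 e_{1,j}+\cdots+\alpha_r e_{r,j}-k_j$ shift keeps it in $\bZ_{>0}$ whenever it was an integer. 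Actually the cleanest way: if $c_1 e_{1,j}+\cdots+c_r e_{r,j}-k_j$ is a positive integer then so is its translate, because we are adding a nonnegative integer; conversely, if $(c_1+\alpha_1)e_{1,j}+\cdots+(c_r+\alpha_r)e_{r,j}-k_j$ is a positive integer, then $c_1e_{1,j}+\cdots+c_re_{r,j}-k_j$ is an integer, and it is positive since $\cn\in\bR^r_{>0}$ forces $c_1 e_{1,j}+\cdots+c_r e_{r,j}>0$ on any component $E_j$ with some $e_{i,j}>0$, and if $E_j$ supports $H_{\cn+\alphan}$ then some $e_{i,j}>0$. Hence $E_j\leqslant H_{\cn}$ if and only if $E_j\leqslant H_{\cn+\alphan}$, and the two reduced divisors coincide.

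The one subtlety to be careful about is the strict positivity for the converse direction, i.e.\ ruling out that the value at $\cn$ could be a non-positive integer while the value at $\cn+\alphan$ is a positive one. This is where the hypothesis $\cn\in\bR^r_{>0}$ (rather than merely $\bR^r_{\geqslant0}$) is used: it guarantees $c_1e_{1,j}+\cdots+c_re_{r,j}>0$ for every $j$ such that $E_j$ lies in the support of some $F_i$. Combined with the integrality (which transfers freely in both directions) and the fact that components outside $\bigcup_i\supp F_i$ satisfy $e_{i,j}=0$ for all $i$ and hence never appear in any $H_{\cn+\alphan'}$, this closes the argument. I do not expect any real obstacle here; the statement is essentially a bookkeeping observation and the proof is a few lines of elementary reasoning about fractional parts and nonnegativity.
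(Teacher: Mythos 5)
Your transfer of integrality and the forward implication are fine, but the converse positivity step is a genuine gap. From $c_1e_{1,j}+\cdots+c_re_{r,j}>0$ you conclude that $c_1e_{1,j}+\cdots+c_re_{r,j}-k_j>0$, and this is a non sequitur whenever $k_j>0$; on a smooth surface every exceptional component has $k_j\geqslant 1$, so this is the typical situation, not a marginal one. Concretely, take $X$ smooth, $r=1$, $\fa=\fM$, so the log-resolution is a single blow-up with $e_{1,1}=1$ and $k_1=1$: at $c=1$ the quantity $c\,e_{1,1}-k_1=0$ is an integer but not a positive one, while at $c=2$ it equals $1\in\bZ_{>0}$. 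Thus under the literal membership test ``$c_1e_{1,j}+\cdots+c_re_{r,j}-k_j\in\bZ_{>0}$'' that you work with, the asserted periodicity actually fails (the same happens in Example~\ref{rays}, e.g.\ on $E_5$ for any $\cn\in\bR^2_{>0}$ with $12c_1+21c_2=9$), so no amount of bookkeeping with fractional parts can rescue the implication you need: the value at $\cn$ can be a non-positive integer while the value at $\cn+\alphan$ is a positive one.

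The statement is true---and essentially tautological---for the maximal jumping divisor as the paper actually uses it, namely through the equivalent description $H_{\cn}=\lceil K_\pi-(1-\varepsilon)(c_1F_1+\cdots+c_rF_r)\rceil-\lceil K_\pi-(c_1F_1+\cdots+c_rF_r)\rceil$. Comparing coefficients, $E_j\leqslant H_{\cn}$ exactly when $c_1e_{1,j}+\cdots+c_re_{r,j}-k_j\in\bZ$ \emph{and} $c_1e_{1,j}+\cdots+c_re_{r,j}>0$; the second condition is automatic on every component as soon as $\cn\in\bR^r_{>0}$ (each $\fa_i$ is $\fM$-primary, so $F_i\geqslant Z$ and all $e_{i,j}\geqslant 1$), hence it plays no role when comparing $\cn$ with $\cn+\alphan$, and the integrality condition is manifestly invariant under adding $\alpha_1e_{1,j}+\cdots+\alpha_re_{r,j}\in\bZ$. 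That is the entire content of the lemma (the paper gives no separate proof and says it follows from the definition), and it is this round-up characterization, not the displayed $\bZ_{>0}$ condition, that the rest of the paper relies on --- for instance the identity $\J(\fab^{(1-\varepsilon)\cn})=\pi_{*}\cO_{X'}(\lceil K_\pi-c_1F_1-\cdots-c_rF_r\rceil+H_{\cn})$ used in the proof of Theorem~\ref{thm:multiplicity1_cMMI}. So you should replace your converse-positivity argument by this observation; as written, that step is false.
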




The linear recurrence that the multiplicities satisfy is described in terms of the excesses at dicritical components.

\begin{proposition} \label{prop:growthMMI}
Let $\fab=\left(\fa_1,\ldots,\fa_r\right)\subseteq \left(\cO_{X,O}\right)^r$ be a tuple of $\fM$-primary ideals and
$\alphan=(\alpha_1,\ldots,\alpha_r)\in\Z^r_{\geqslant0}$.    
Then,
$$m\left(\cn+\alphan\right)- m\left(\cn\right) = \sum_{E_i \leqslant H_{\cn}} \sum_{j=1}^{r} \alpha_j\rho_{j,i}\,.$$
\end{proposition}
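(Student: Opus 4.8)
The plan is to combine Theorem~\ref{thm:multiplicity1_cMMI} with the periodicity of the maximal jumping divisor provided by Lemma~\ref{lem:sameMJD}. First I would apply Lemma~\ref{lem:sameMJD} to conclude that $H_{\cn+\alphan}=H_{\cn}$; call this common reduced divisor $H$. Thus in the multiplicity formula for both $\cn$ and $\cn+\alphan$ the combinatorial term $\#\{\text{connected components of }H\}$ and the number of edges $a_H$ are literally the same, and only the intersection-theoretic term changes. Writing $D_{\cn}:=\lceil K_\pi - c_1F_1-\cdots-c_rF_r\rceil$ and $D_{\cn+\alphan}:=\lceil K_\pi -(c_1+\alpha_1)F_1-\cdots-(c_r+\alpha_r)F_r\rceil$, Theorem~\ref{thm:multiplicity1_cMMI} gives
\[
m(\cn+\alphan)-m(\cn) = \bigl(D_{\cn+\alphan}+H\bigr)\cdot H - \bigl(D_{\cn}+H\bigr)\cdot H = \bigl(D_{\cn+\alphan}-D_{\cn}\bigr)\cdot H,
\]
so the whole problem reduces to identifying the divisor $D_{\cn+\alphan}-D_{\cn}$.

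Next I would compute this difference of round-ups. Since each $\alpha_j$ is a non-negative integer and $F_j=\sum_{\ell} e_{j,\ell}E_\ell$ has integer coefficients $e_{j,\ell}$, the extra term $-\alpha_1F_1-\cdots-\alpha_rF_r$ shifts the argument of the round-up by an \emph{integral} divisor. Hence rounding up commutes with it, and
\[
D_{\cn+\alphan}-D_{\cn} = -\alpha_1F_1-\cdots-\alpha_rF_r = -\sum_{j=1}^r \alpha_j F_j.
\]
Plugging this into the displayed identity above yields
\[
m(\cn+\alphan)-m(\cn) = \Bigl(-\sum_{j=1}^r \alpha_j F_j\Bigr)\cdot H = \sum_{E_i\leqslant H}\ \Bigl(-\sum_{j=1}^r \alpha_j F_j\Bigr)\cdot E_i = \sum_{E_i\leqslant H_{\cn}} \sum_{j=1}^r \alpha_j\, \rho_{j,i},
\]
using $H=H_{\cn}$ (reduced, so $H=\sum_{E_i\leqslant H_{\cn}}E_i$) in the middle step and the definition $\rho_{j,i}=-F_j\cdot E_i$ of the excess in the last step. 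This is exactly the claimed formula.

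I expect the only genuinely delicate point to be the commutation of the round-up with the integral shift, which I would spell out via the elementary fact that $\lceil D - A\rceil = \lceil D\rceil - A$ whenever $A$ is a divisor with integer coefficients; everything else is bookkeeping. One should also note that Lemma~\ref{lem:sameMJD} is stated for $\cn\in\bR^r_{>0}$, so if a component $c_i$ of $\cn$ vanishes a short remark is needed, but since the statement of the proposition does not restrict $\cn$ to the open orthant the cleanest route is to invoke the proposition for $\cn$ in the closure by the same round-up computation directly, or simply to observe that the maximal jumping divisor is defined by the condition $c_1e_{1,j}+\cdots+c_re_{r,j}-k_j\in\bZ_{>0}$, which is manifestly unchanged when each $c_j$ increases by an integer $\alpha_j$ because $e_{j,\ell}\in\bZ$; this gives $H_{\cn+\alphan}=H_{\cn}$ with no positivity hypothesis and completes the argument.
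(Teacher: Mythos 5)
Your proposal is correct and follows essentially the same route as the paper: invoke Lemma \ref{lem:sameMJD} to identify the maximal jumping divisors of $\cn$ and $\cn+\alphan$, subtract the two instances of the formula in Theorem \ref{thm:multiplicity1_cMMI}, and use that the round-up commutes with the integral shift $-\alpha_1F_1-\cdots-\alpha_rF_r$ together with $\rho_{j,i}=-F_j\cdot E_i$. You merely make explicit the round-up commutation and the remark about nonnegative versus strictly positive $\cn$, which the paper leaves implicit.
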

 \begin{proof}
Lemma \ref{lem:sameMJD} above states that $\cn$ and $\cn + \alphan$ have the same maximal jumping divisor, say $H_{\cn}$. Therefore, by Theorem \ref{thm:multiplicity1_cMMI}, we have
 $$m\left(\cn+\alphan\right)- m\left(\cn\right)= -(\alpha_1F_1+\cdots +\alpha_rF_r)\cdot H_{\cn}= \sum_{E_i \leqslant H_{\cn}} \sum_{1\leqslant j\leqslant r} \alpha_j\rho_{j,i}.$$
 \end{proof}

In the sequel, we will just denote
$\rho_{\cn, \alphan}:= \sum_{E_i \leqslant H_{\cn}} \sum_{j=1}^{r} \alpha_j\rho_{j,i}\,.$ The formula for the Poincar\'e series that we obtain  is the following:

\begin{theorem} \label{thm:rational}
 Let $\fab=\{\fa_1,\dots,\fa_r\} \subseteq \left(\cO_{X,O}\right)^r$ be a tuple of $\m$-primary ideals and let
$L: \cn_0 + \mu {\bf u}$  be a ray in the positive orthant $\bR^r_{\geqslant0} $ with ${\bf u}\in \bZ_{\geqslant 0}$, ${\bf u}\neq {\bf 0}$. The Poincar\'e series of $\fab$ alongside $L$ can be expressed as
$$P_{\fab} (\underline{t}; L)=  \underline{t}^{\cn_0} \sum_{\mu  \in [0,1)} \left( \frac{m(\cn_0+\mu{\bf u})}{1-\underline{t}^{\bf u}} + \rho_{\cn_0+\mu{\bf u}, {\bf u}}
\frac{\underline{t}^{\bf u}}{(1-\underline{t}^{\bf u})^2} \right) \underline{t}^{\mu {\bf u }} $$

\end{theorem}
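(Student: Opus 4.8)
The plan is to prove the formula by splitting the sum over the ray $L$ according to the fractional parameter $\mu$ modulo $1$, and then summing a geometric-type series in the integer part $k$. Concretely, writing every point of $L\cap{\rm\bf JW}_{\fab}$ as $\cn_0+\mu{\bf u}+k{\bf u}$ with $\mu\in[0,1)$ and $k\in\bN$, and using the factorization
$$P_{\fab} (\underline{t}; L)=\underline{t}^{\cn_0}\sum_{\mu\in[0,1)}\underline{t}^{\mu{\bf u}}\sum_{k\in\bN} m(\cn_0+\mu{\bf u}+k{\bf u})\,\underline{t}^{k{\bf u}}$$
already recorded in the text (note that only finitely many $\mu\in[0,1)$ contribute, since the jumping points are discrete and have bounded denominators), it suffices to evaluate the inner sum $\sum_{k\in\bN} m(\cn+k{\bf u})\,\underline{t}^{k{\bf u}}$ for a fixed jumping point $\cn=\cn_0+\mu{\bf u}$.

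The key input is Proposition \ref{prop:growthMMI} together with Lemma \ref{lem:sameMJD}: since ${\bf u}\in\Z^r_{\geqslant0}$, the maximal jumping divisor is constant along the ray, $H_{\cn+k{\bf u}}=H_{\cn}$, and hence
$$m(\cn+k{\bf u})=m(\cn)+k\,\rho_{\cn,{\bf u}}$$
for every $k\in\bN$, where $\rho_{\cn,{\bf u}}=\sum_{E_i\leqslant H_{\cn}}\sum_{j=1}^r u_j\rho_{j,i}$. Substituting this arithmetic progression into the inner sum splits it as
$$\sum_{k\in\bN} m(\cn+k{\bf u})\,\underline{t}^{k{\bf u}}=m(\cn)\sum_{k\in\bN}\underline{t}^{k{\bf u}}+\rho_{\cn,{\bf u}}\sum_{k\in\bN}k\,\underline{t}^{k{\bf u}},$$
and the two elementary identities $\sum_{k\geqslant0}x^k=\tfrac{1}{1-x}$ and $\sum_{k\geqslant0}kx^k=\tfrac{x}{(1-x)^2}$, applied with $x=\underline{t}^{\bf u}$, give exactly $\tfrac{m(\cn)}{1-\underline{t}^{\bf u}}+\rho_{\cn,{\bf u}}\tfrac{\underline{t}^{\bf u}}{(1-\underline{t}^{\bf u})^2}$. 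Reassembling over the finitely many $\mu\in[0,1)$ yields the claimed closed form, and since $\rho_{\cn,{\bf u}}\in\bZ$ and $m(\cn)\in\bZ$, the expression visibly lies in $\bC(z_1,\dots,z_r)$ after the substitution $z_i=t_i^{1/e}$, establishing rationality.

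The only genuinely delicate point is bookkeeping rather than substance: one must check that the index set $\{\cn_i\}_{i>0}=L\cap{\rm\bf JW}_{\fab}$ is correctly reindexed by the pairs $(\mu,k)$, i.e.\ that every jumping point on $L$ beyond $\cn_0$ has the form $\cn_0+(\mu+k){\bf u}$ with a \emph{jumping} point $\cn_0+\mu{\bf u}$, $\mu\in[0,1)$ — this is where Lemma \ref{lem:sameMJD} and Proposition \ref{prop:growthMMI} are used again, since $\rho_{\cn,{\bf u}}\geqslant0$ guarantees $m$ is nondecreasing along the ray so that $\cn+k{\bf u}$ is a jumping point whenever $\cn$ is (when $m(\cn)=0$ but $\rho_{\cn,{\bf u}}>0$ the convention is harmless, as such $\mu$ contributes a legitimate rational summand; when $\rho_{\cn,{\bf u}}=0$ too, $m$ vanishes identically on that sub-progression and contributes zero). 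I would also remark that the interchange of the finite sum over $\mu$ with the infinite sum over $k$ is automatic, and that formal convergence of the geometric series in the ring of formal power series in the $z_i$ is all that is needed. Once this indexing is dispatched, the computation above is the whole proof.
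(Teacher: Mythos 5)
Your proposal is correct and follows essentially the same route as the paper: both use Lemma \ref{lem:sameMJD} via Proposition \ref{prop:growthMMI} to get the arithmetic progression $m(\cn+k{\bf u})=m(\cn)+k\rho_{\cn,{\bf u}}$ along the ray, and then sum the resulting arithmetic-geometric series in $\underline{t}^{\bf u}$ to obtain the closed form. Your extra remarks on reindexing by $(\mu,k)$ and on formal convergence are fine but not a departure from the paper's argument.
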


\begin{proof}
Given a point  $\cn= \cn_0 + \mu {\bf u}$, with $\mu \in [0,1)$ we have, using Proposition \ref{prop:growthMMI}, that
\[m(\cn+k{\bf u})= m(\cn) + k\sum_{E_i\leqslant H_{\cn}}\sum_{j=1}^{ r}u_j\rho_{j,i} = m(\cn) + k \rho_{\cn, {\bf u}} \]

Therefore
\begin{eqnarray*}
\sum_{k\geqslant 0}m(\cn+k {\bf u})\hskip 1mm \underline{t}^{\cn+k {\bf u}} & = & m(\cn ) \hskip 1mm  \underline{t}^{\cn}+
(m(\cn) +  \rho_{\cn, {\bf u}}) \hskip 1mm \underline{t}^{\cn+ {\bf u}} + (m(\cn) +  2\rho_{\cn, {\bf u}}) \hskip 1mm  \underline{t}^{\cn+2 {\bf u}} + \cdots   \\
&= &  \left(\frac{m(\cn)}{1-\underline{t}^{\bf u}}+ \rho_{\cn, {\bf u}}\frac{\underline{t}^{\bf u}}{(1-\underline{t}^{\bf u})^2}\right) \underline{t}^{\cn}
\end{eqnarray*}
and the result follows.
\end{proof}


\begin{remark}
In the case that $ L $ is the $i$-th axis of  the positive orthant $\bR^r_{\geqslant0} $, in particular if $\cn_0$ is the origin,  we obtain the Poincar\'e series   of the ideal $\fa_i$.
\end{remark}

\section{Multiplicities of jumping points after small perturbations} \label{perturbations}

Let $\fab=\left(\fa_1,\ldots,\fa_r\right) \subseteq \left(\cO_{X,O}\right)^r$ be a tuple of $\m$-primary ideals
and consider two parallel rays  $L: \cn_0 + \mu {\bf u}$ and $L': \cn'_0 + \mu {\bf u}$ as those considered in the previous section
that are close enough. Our aim is to compare the sequences of mixed multiplier ideals indexed by points in both rays and see how
the multiplicity of a jumping point varies with a small perturbation. To illustrate this phenomenon we start with
the following example.

\begin{example} \label{rays}
Consider the tuple of ideals $\fab=(\fa_1,\fa_2)$ on a smooth surface $X$ given by:

\vskip 2mm

$\cdot$ $\fa_1=\left((x+y)^4,\right.$ $x^9(x+y),$ $x^{11},$ $x^6(x+y)^2,$ $\left.x^3(x+y)^3\right)$,

$\cdot$ $\fa_2=\left(y^3,x^7,x^5y,x^3y^2\right)$.

\vskip 2mm

\noindent The dual graph of the log-resolution of $\fab$ is as follows:

\vskip 2mm

\begin{center}
\begin{tabular}{c}
   \begin{tikzpicture}
   \draw  (-5,0) -- (4,0);
   \draw [dashed,->,thick,red] (2,0) -- (3,1);
   \draw [dashed,->,thick] (-4,0) -- (-5,1);
   \draw (-0.2,-0.3) node {{ $E_1$}};
   \draw (0.8,-0.3) node {{ $E_2$}};
   \draw (3.8,-0.3) node {{ $E_3$}};
   \draw (2.8,-0.3) node {{ $E_4$}};
   \draw (1.8,-0.3) node {{ $E_5$}};
   \draw (-1.2,-0.3) node {{ $E_{6}$}};
   \draw (-5.2,-0.3) node {{ $E_{7}$}};
   \draw (-2.2,-0.3) node {{ $E_{8}$}};
   \draw (-3.2,-0.3) node {{ $E_{9}$}};
   \draw (-4.2,-0.3) node {{ $E_{10}$}};
   \filldraw  (0,0) circle (2pt)
              (1,0) circle (2pt)
              (3,0) circle (2pt)
              (4,0) circle (2pt)
              (-1,0) circle (2pt)
              (-2,0) circle (2pt)
              (-3,0) circle (2pt)
              (-5,0) circle (2pt);
   \filldraw  [fill=white]  (2,0) circle (3pt)
                            (-4,0) circle (3pt);
   \end{tikzpicture}
\\
\end{tabular}
\end{center}

\noindent where the blank dots correspond to dicritical divisors and their excesses are represented by broken arrows.
The divisors associated with this resolution are

\vskip 2mm

$\cdot$ $F_1=4E_1+4E_2+4E_3+8E_4+12E_5+8E_6+11E_7+20 E_8+32E_9+44E_{10}$,

$\cdot$ $F_2=3E_1+6E_2+7E_3+14E_4+21E_5+3E_6+3E_7+6E_8+9E_9+12E_{10}$

\vskip 2mm

\noindent and the relative canonical divisor is:

\vskip 2mm

$\cdot$ $K_{\pi}=E_1+2E_2+3E_3+6E_4+9E_5+2E_6+3E_7+6E_8+10E_9+14E_{10}$.

\vskip 2mm

\noindent In Figure 2, we present the constancy regions of the corresponding mixed multiplier ideals, those regions 
are computed using the algorithm in \cite{ACAMDC16}. The chains of mixed multiplier ideals over the parallel rays $L: \left(0,\frac{101}{780}\right)  + \mu (1,1)$ and $L': \left(0,\frac{37}{390}\right) + \mu (1,1)$ are given in Table \ref{table:T1}. The sets of generators for these  ideals are computed using the
algorithm in \cite{AAB17} (see also \cite{BD17} ).



{ \begin{figure}[ht!!!]\label{fig:EX4.1}
  \begin{center}
  \medskip
    \includegraphics[width=.40\textwidth]{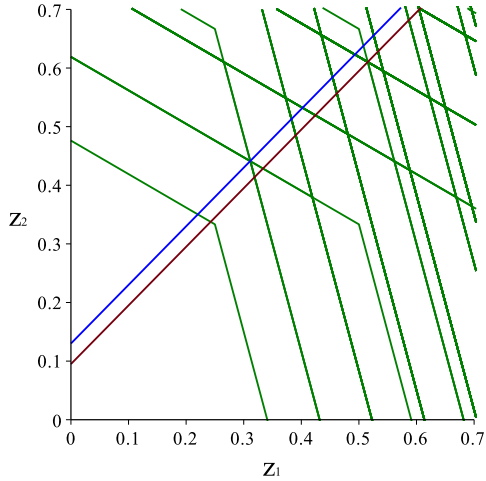}
  \end{center}
\caption{ Constancy regions of the mixed multiplier ideals of $\fab$ and the rays $L, L'$,  in blue and Bordeaux respectively.}
\end{figure}}

\def\arraystretch{1.3}
\begin{table}[!ht]
\centering
\begin{tabular}{|m{23mm}|m{49mm}|m{23mm}|m{49mm}|}
\hline
Jumping\newline points in $L$ &  \(\mathcal{J}(\fab^{\lambdab})\)& Jumping\newline points in $L'$&  \(\mathcal{J}(\fab^{\lambdab})\) \\
[1.5pt] \hline
\hline
$\left(\frac {81}{260}, \frac {86}{195} \right)$ 	& {\small $(x,y)$} & $\left(\frac {67}{130},\frac {119}{195} \right)$ 	 & {\small $(x,y)$} \\[1.5pt] \hline
\multicolumn{2}{| c |}{}& $\left(\frac {203}{520},\frac {757}{1560} \right)$ 	 & {\small $(x+y,x^2)$} \\[1.5pt] \hline
$\mathbf{ \left(\frac {631}{2860}, \frac {751}{2145} \right)}$ 	& {\small $(x^2,y^2,xy)$} & $\left(\frac {267}{455},\frac {1861}{2730} \right)$ 	 & {\small $(x^2,y^2,xy)$} \\[1.5pt] \hline
$\left(\frac {697}{1820}, \frac {1399}{2730} \right)$ 	 & {\small $(xy + x^2, x^3, y^2 + 2xy + x^2)$}& $\left(\frac {347}{1430},\frac {724}{2145} \right)$ 	 & {\small $(xy + x^2, x^3, y^2 + 2xy + x^2)$} \\[1.5pt] \hline
$\left(\frac {827}{1820}, \frac {797}{1365} \right)$ 	& {\small $(y^2 + xy, x^2y, x^2y + x^3)$}& $\left(\frac {477}{1430},\frac {919}{2145} \right)$ 	 & {\small $(y^2 + xy, x^2y, x^2y + x^3)$} \\[1.5pt] \hline
$\left(\frac {957}{1820}, \frac {1789}{2730} \right)$ 	& {\small $(y^2 + xy, x^2y, x^2y + x^3)$}& $\left(\frac {607}{1430},\frac {1114}{2145} \right)$ 	 & {\small $(y^2 + xy, x^2y, x^2y + x^3)$} \\[1.5pt] \hline
$\left(\frac {1151}{2860}, \frac {1141}{2145} \right)$ 	& {\small $( y^2 + xy, x^3y, x^3y + x^4)$}& \multicolumn{2}{ c |}{}  \\[1.5pt] \hline
$\left(\frac {1411}{2860}, \frac {1336}{2145} \right)$ 	& {\small $(y^3 + xy^2, x^3y, x^3y + x^4, x^2y^2,\newline xy^2 + x^2y)$} & $\mathbf{ \left(\frac {1161}{3640},\frac {4519}{10920} \right)}$ & {\small $(y^3 + xy^2, x^3y, x^3y + x^4, x^2y^2,\newline xy^2 + x^2y)$} \\[1.5pt] \hline
$\left(\frac {1849}{3640}, \frac {6961}{10920} \right)$ & {\small$(y^3 + 2xy^2 + x^2y, x^4y, xy^2 + x^2y,\newline x^2y^2 + 2x^3y + x^4, x^3y + x^4)$}& $\left(\frac {1681}{3640},\frac {6079}{10920} \right)$ & {\small$(y^3 + 2xy^2 + x^2y, x^4y, xy^2 + x^2y,\newline x^2y^2 + 2x^3y + x^4, x^3y + x^4)$} \\[1.5pt] \hline
$\left(\frac {2109}{3640},\frac {7741}{10920} \right)$ & {\small $( y^3 + 2xy^2 + x^2y, x^2y^2 + x^3y,\newline x^4y, x^2y^2 + 2x^3y + x^4)$} & $\left(\frac {1941}{3640},\frac {6859}{10920} \right)$ & {\small $( y^3 + 2xy^2 + x^2y, x^2y^2 + x^3y,\newline x^4y, x^2y^2 + 2x^3y + x^4)$} \\[1.5pt] \hline
$\left(\frac {2369}{3640},\frac {8521}{10920}\right)$ & {\small $( y^3 + 2xy^2 + x^2y, x^2y^2 + x^3y,\newline x^5y, x^2y^2 + 2x^3y + x^4)$}& $\left(\frac {2201}{3640},\frac {7639}{10920} \right)$ & {\small $( y^3 + 2xy^2 + x^2y, x^2y^2 + x^3y,\newline x^5y, x^2y^2 + 2x^3y + x^4)$} \\[1.5pt] \hline

\end{tabular}
\vspace{5pt}
\caption{  Chains of mixed multiplier ideals of $\fab$ over the rays $L$ and $L'$. In bold type we present the jumping points with multiplicity $2$.
} \label{table:T1}
\end{table}

\end{example}

In the previous example, we  observe that the chains of mixed multiplier ideals  differ whenever the
corresponding ray crosses the intersection of $\cC$-facets. Indeed, the multiplicity of a jumping point at the intersection of $\cC$-facets
is bigger than the multiplicities of jumping points in its neighborhood. The aim of this section is to provide an explanation to this phenomenon. We start with the fact that the multiplicity
does not increase in the interior of $\cC$-facets.

\begin{proposition}\label{interior}
Let $\fab=\left(\fa_1,\ldots,\fa_r\right)\subseteq \left(\cO_{X,O}\right)^r$ be a tuple of $\m$-primary ideals
and let $\lambdab, \lambdapb$ be two jumping points in the interior of a $\mathcal{C}$-facet.
Then $m(\lambdab)=m( \lambdapb)$.
\end{proposition}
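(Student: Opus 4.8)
The plan is to show that moving from $\lambdab$ to $\lambdapb$ along a path staying inside the interior of a single $\cC$-facet changes neither the maximal jumping divisor nor the relevant intersection numbers appearing in the formula of Theorem~\ref{thm:multiplicity1_cMMI}. First I would recall that by definition a $\cC$-facet is the intersection of the boundary of a connected component of a constancy region $\cC_{\fab}(\lambdab)$ with a single supporting hyperplane $V_{j,\ell}$. Thus for every point $\lambdab$ in its interior the mixed multiplier ideal $\J(\fab^{\lambdab})$ is the same, and (shrinking toward the open constancy region behind the facet) the ideal $\J(\fab^{(1-\varepsilon)\lambdab})$ is also the same for all such $\lambdab$ and $\varepsilon$ small. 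Since $m(\lambdab)=\dim_{\bC}\J(\fab^{(1-\varepsilon)\lambdab})/\J(\fab^{\lambdab})$, this already forces $m(\lambdab)=m(\lambdapb)$.

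To make this rigorous I would argue as follows. Let $D_{\lambdab}=\sum e_j^{\lambdab}E_j$ be the antinef closure of $\lfloor c_1F_1+\cdots+c_rF_r-K_\pi\rfloor$; by \cite[Theorem 3.3]{ACAMDC16} the constancy region is cut out by the strict inequalities $e_{1,j}z_1+\cdots+e_{r,j}z_r<k_j+1+e_j^{\lambdab}$ over rupture or dicritical $E_j$, and its closure meets $V_{j_0,\ell}$ exactly along the facet where one such inequality becomes an equality. For $\lambdab$ in the \emph{interior} of this facet, precisely the components $E_j$ with $c_1e_{1,j}+\cdots+c_re_{r,j}-k_j\in\bZ_{>0}$ are the same (these are the ones where the supporting hyperplane associated to the facet passes through $\lambdab$, together with any forced by the defining equation), so the maximal jumping divisor $H_{\lambdab}$ is constant along the interior of the facet; call it $H$. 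Likewise $\lceil K_\pi-c_1F_1-\cdots-c_rF_r\rceil$ is constant there, since the round-up only changes when one of the linear forms $c_1e_{1,j}+\cdots+c_re_{r,j}-k_j$ crosses an integer, which does not happen in the interior.

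Now apply Theorem~\ref{thm:multiplicity1_cMMI}:
\[
m(\lambdab)=\bigl(\lceil K_\pi-c_1F_1-\cdots-c_rF_r\rceil+H\bigr)\cdot H+\#\{\text{connected components of }H\}.
\]
Every term on the right-hand side depends only on $H$ and on the divisor $\lceil K_\pi-c_1F_1-\cdots-c_rF_r\rceil$, both of which we have just seen are the same for $\lambdab$ and $\lambdapb$ (the interior of a $\cC$-facet is connected, being the interior of a convex polytope face, so one can pass from one point to the other within it). Hence $m(\lambdab)=m(\lambdapb)$.

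The main obstacle is the bookkeeping in the middle step: verifying carefully that \emph{both} the set of components with $c_1e_{1,j}+\cdots+c_re_{r,j}-k_j\in\bZ_{>0}$ \emph{and} the round-up divisor $\lceil K_\pi-c_1F_1-\cdots-c_rF_r\rceil$ remain literally unchanged throughout the (relatively) open facet, and that no component is lost or gained at interior points because of a coincidence of two supporting hyperplanes $V_{j,\ell}=V_{i,\ell'}$. This is handled by the description of constancy regions and jumping walls from \cite{ACAMDC16} recalled in Section~\ref{Sec1}: interior points of a $\cC$-facet lie on exactly one facet of the jumping wall, so the integrality conditions picking out $H$ are governed by a single linear equation plus a fixed system of strict inequalities, all of which are stable under the perturbation.
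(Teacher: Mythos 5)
Your argument hinges on the claim that both the maximal jumping divisor $H_{\lambdab}$ and the round-up $\left\lceil K_\pi-c_1F_1-\cdots-c_rF_r\right\rceil$ stay literally unchanged as $\lambdab$ moves in the interior of the $\cC$-facet, and this is exactly what fails: the paper explicitly remarks, right after its proof, that two interior points of a $\cC$-facet may have \emph{different} maximal jumping divisors. The reason is that the integrality condition $c_1e_{1,j}+\cdots+c_re_{r,j}-k_j\in\bZ_{>0}$ defining $H_{\cn}$ is governed by \emph{all} candidate hyperplanes $V_{j,\ell}$, not only by those supporting jumping walls; a hyperplane attached to a component $E_j$ which is neither a wall nor associated to the facet can perfectly well cross the relative interior of the $\cC$-facet (it does not subdivide the facet precisely because it produces no jump, cf.\ Theorem \ref{cond_suf}: the extra pieces of $H$ it creates meet $\left\lceil K_\pi-\sum c_iF_i\right\rceil+H$ in $-1$ and contribute $0$). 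At such crossing points extra components enter $H_{\lambdab}$ and, simultaneously, the coefficients of the round-up drop along them, so neither ingredient of the formula of Theorem \ref{thm:multiplicity1_cMMI} is individually constant on the interior; only their combination is, and verifying that cancellation is precisely the content your proof skips. Your last paragraph acknowledges the issue but dismisses it on the grounds that interior points lie on exactly one facet of the jumping wall — that does not exclude their lying on candidate hyperplanes which are not walls, which is the actual phenomenon. The opening ``abstract'' argument has the same gap in another guise: that $\J(\fab^{\lambdab})$ and $\J(\fab^{(1-\varepsilon)\lambdab})$ are the same ideals for every interior point of the facet is not part of the definition of a $\cC$-facet (the definition only involves the constancy region on one side) and is asserted rather than proved; making it rigorous requires exactly the structural input from \cite{ACAMDC16} that your proof tries to bypass.

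For comparison, the paper's route replaces the maximal by the \emph{minimal} jumping divisor $G_{\lambdab}$ (Definition \ref{def:minimal}): by \cite[Lemma 4.6]{ACAMDC16} all interior points of a $\cC$-facet share the same minimal jumping divisor, and the companion multiplicity formula (Proposition \ref{eq:multiplicity1_MMI}) then yields the constancy of $m$. If you insist on working with $H_{\lambdab}$, you would have to show directly that the components entering $H$ only at special interior points (and the corresponding change in the round-up) contribute zero to the formula, which amounts to reproving the minimal-versus-maximal comparison rather than avoiding it.
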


To prove this result it is more convenient to compute the multiplicity
of a jumping point by using the so-called {\it minimal jumping divisor} instead of
the maximal jumping divisor as we did in Section \ref{multiplicities}. This minimal jumping
divisor is closely related to the algorithm developed in \cite{ACAMDC16} to compute
the constancy regions of mixed multiplier ideals.  We give its definition below but we refer
to \cite{ACAMDC16} for details.

\begin{definition} \label{def:minimal}
Let $\fab=\left(\fa_1,\ldots,\fa_r\right)\subseteq
\left(\cO_{X,O}\right)^r$ be a tuple of ideals. Given a jumping point $\lambdab=(\lambda_1,\dots , \lambda_r) \in \R^r_{\geqslant0}$, its corresponding
{\it minimal jumping divisor} is the reduced divisor $G_{\lambda} \leqslant
\sum_{i=1}^r F_i$ supported on those components $E_j$ for which the
point $\lambdab$ satisfies
$$ \lambda_1  e_{1,j}+ \cdots +   \lambda_r e_{r,j} = k_j + 1 +
e_j^{(1-\varepsilon){\lambdab}},$$ where, for a sufficiently small $\varepsilon>0$,
$D_{(1-\varepsilon)\lambdab} = \sum e_j^{(1-\varepsilon){\lambdab}}
\hskip 1mm E_j$ is the antinef closure of $$\lfloor
(1-\varepsilon){\lambda_1} F_1 + \cdots + (1-\varepsilon){\lambda_r}
F_r - K_\pi\rfloor.$$
\end{definition}

Using the same arguments that we used in the proof of Theorem \ref{thm:multiplicity1_cMMI}
we may provide the following formula for the multiplicity of a jumping point in terms of the minimal
jumping divisor.

\begin{proposition}\label{eq:multiplicity1_MMI}
Let $\fab=\left(\fa_1,\ldots,\fa_r\right) \subseteq \left(\cO_{X,O}\right)^r$ be a tuple of $\fM$-primary ideals and $G_{\lambdab}$
the maximal jumping divisor associated to some jumping point $\lambdab \in \bR_{>0}^r$. Then,
\begin{align*}
m(\lambdab) & = \left(\left\lceil K_\pi-\lambda_1 F_1-\cdots-\lambda_r F_r\right\rceil +
G_{\lambdab}\right)\cdot G_{\lambdab} +\#\{\text{connected components
of } G_{\lambdab}\}
\end{align*}
\end{proposition}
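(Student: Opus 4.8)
The plan is to mimic the proof of Theorem~\ref{thm:multiplicity1_cMMI} almost verbatim, replacing the maximal jumping divisor $H_{\cn}$ by the minimal jumping divisor $G_{\lambdab}$. First I would observe that for a jumping point $\lambdab$ the formula from the definition of $G_{\lambdab}$, namely $\lambda_1 e_{1,j}+\cdots+\lambda_r e_{r,j}=k_j+1+e_j^{(1-\varepsilon)\lambdab}$ for $E_j\leqslant G_{\lambdab}$, says precisely that passing from $(1-\varepsilon)\lambdab$ to $\lambdab$ the round-up $\lceil K_\pi-\lambda_1F_1-\cdots-\lambda_rF_r\rceil$ increases by the reduced divisor $G_{\lambdab}$ along the components of $G_{\lambdab}$. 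More precisely, one needs the identity
\[
\J(\fab^{(1-\varepsilon)\lambdab}) = \pi_*\Oc_{X'}\bigl(\lceil K_\pi-\lambda_1F_1-\cdots-\lambda_rF_r\rceil + G_{\lambdab}\bigr),
\]
which is the analogue for $G_{\lambdab}$ of the identity stated in the definition of $H_{\cn}$; this is where the input from \cite{ACAMDC16} enters, since $D_{(1-\varepsilon)\lambdab}$ being antinef is what guarantees that $\pi_*\Oc_{X'}(\lceil\cdot\rceil+G_{\lambdab})$ computes the right stalk and that adding $G_{\lambdab}$ does not overshoot.

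Granting that identity, the rest of the argument is the same homological computation. I would write the short exact sequence
\[
0\to \Oc_{X'}(D)\to \Oc_{X'}(D+G_{\lambdab})\to \Oc_{G_{\lambdab}}(D+G_{\lambdab})\to 0,
\]
with $D=\lceil K_\pi-\lambda_1F_1-\cdots-\lambda_rF_r\rceil$, push it forward to $X$, and apply local vanishing for mixed multiplier ideals \cite{Laz04} exactly as in the proof of Theorem~\ref{thm:multiplicity1_cMMI} to get
\[
m(\lambdab)=h^0\bigl(G_{\lambdab},\Oc_{G_{\lambdab}}(D+G_{\lambdab})\bigr).
\]
Then, using that $G_{\lambdab}$ has simple normal crossings, I would split this $h^0$ as a sum of $h^0$ over the components minus the number $a_{G_{\lambdab}}$ of edges in the dual graph, compute each $h^0(E_i,\Oc_{E_i}(D+G_{\lambdab}))=(D+G_{\lambdab})\cdot E_i+1$ whenever the degree is $\geqslant -1$, and sum up to obtain $(D+G_{\lambdab})\cdot G_{\lambdab}+v_{G_{\lambdab}}-a_{G_{\lambdab}}$, which equals $(D+G_{\lambdab})\cdot G_{\lambdab}+\#\{\text{connected components of }G_{\lambdab}\}$.

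The one genuine obstacle, and the only place where the argument is not purely formal, is the numerical input $\deg\Oc_{E_i}(D+G_{\lambdab})=(D+G_{\lambdab})\cdot E_i\geqslant -1$ for every $E_i\leqslant G_{\lambdab}$: in the maximal case this was Proposition~\ref{prop:Num_condition_cMMI}, so here one needs the corresponding statement for the minimal jumping divisor. The cleanest route is to note that $G_{\lambdab}\leqslant H_{\lambdab}$ (the minimal jumping divisor is contained in the maximal one), and to show that the relevant intersection inequality for $G_{\lambdab}$ follows from the fact that the coefficients $e_j^{(1-\varepsilon)\lambdab}$ come from an antinef divisor together with the defining equality of $G_{\lambdab}$; alternatively one invokes directly the corresponding results of \cite{ACAMDC16} on minimal jumping divisors. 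I would state this as the analogue of Proposition~\ref{prop:Num_condition_cMMI} (referring to \cite{ACAMDC16}) and then the computation above goes through unchanged. I should also point out the (harmless) typo in the statement: $G_{\lambdab}$ should be called the \emph{minimal} jumping divisor, not the maximal one.
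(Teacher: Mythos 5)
Your proposal is correct and is essentially the paper's own argument: the paper proves this proposition by saying the computation of Theorem~\ref{thm:multiplicity1_cMMI} goes through verbatim with $G_{\lambdab}$ in place of $H_{\cn}$, with the two non-formal inputs you single out (the identity $\J(\fab^{(1-\varepsilon)\lambdab})=\pi_*\Oc_{X'}(\lceil K_\pi-\lambda_1F_1-\cdots-\lambda_rF_r\rceil+G_{\lambdab})$ and the degree bound, here even $(\lceil K_\pi-\lambda_1F_1-\cdots-\lambda_rF_r\rceil+G_{\lambdab})\cdot E_i\geqslant 0$) taken from \cite{ACAMDC16}, exactly as you indicate. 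You are also right that ``maximal'' in the statement is a typo for ``minimal''.
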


\vskip 2mm

It was proved in \cite[Lemma 4.6]{ACAMDC16} that two interior points of a $\mathcal{C}$-facet have the same minimal jumping divisor, which we refer to as {\it the minimal jumping divisor associated to} the $\mathcal{C}$-facet. Therefore by applying \ref{eq:multiplicity1_MMI}, the multiplicity is constant along the interior points of a $\mathcal{C}$-facet and thus proving Proposition \ref{interior}.
We point out that two interior points of a $\mathcal{C}$-facet may have  different maximal jumping divisor
%

\vskip 2mm

This constancy property for the multiplicities is no longer true when considering jumping  points at the intersection of $\mathcal{C}$-facets.
However we can control the multiplicity depending on the number of $\mathcal{C}$-facets that contain this jumping point.

\vskip 2mm

\begin{theorem} \label{intersection}

Let $\fab=\left(\fa_1,\ldots,\fa_r\right) \subseteq \left(\cO_{X,O}\right)^r$ be a tuple of $\m$-primary ideals
and let  $L: \cn_0 + \mu {\bf u}$ and $L': \cn'_0 + \mu {\bf u}$ be two parallel rays that are close enough.
Let $\lambdab \in L$ be a jumping point and  $B_\varepsilon(\lambdab)$ be a ball centered at $\lambdab$ of a sufficiently small radius $\varepsilon>0$ such that
$L\cap {\rm \bf JW}_{\fab} \cap B_\varepsilon(\lambdab) =\{\lambdab\}$. If
$L'\cap {\rm \bf JW}_{\fab} \cap B_\varepsilon(\lambdab) =\{\lambdab_1, \dots , \lambdab_n\}$ then
\[m(\lambdab)=m(\lambdab_1)+m(\lambdab_2)+\cdots+m(\lambdab_n)\,.\]

\end{theorem}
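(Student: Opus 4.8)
The strategy is to track how the maximal jumping divisor, and hence the multiplicity, changes as we move the ray $L$ to the parallel ray $L'$. The key point is that the single jumping point $\lambdab\in L$ sits at the intersection of several $\cC$-facets, and when we perturb to $L'$ these $\cC$-facets get crossed at distinct jumping points $\lambdab_1,\dots,\lambdab_n$. First I would set up the local picture: since $\varepsilon$ is small and $L, L'$ are close enough, the ball $B_\varepsilon(\lambdab)$ meets only the supporting hyperplanes $V_{j,\ell}$ that pass through $\lambdab$, and crossing each such hyperplane corresponds exactly to some components being added to (or removed from) the maximal jumping divisor. The components $E_j$ appearing in $H_{\lambdab}$ are precisely those for which $\lambda_1 e_{1,j}+\cdots+\lambda_r e_{r,j}-k_j\in\bZ_{>0}$; I would argue that this set partitions according to which of the hyperplanes through $\lambdab$ the component belongs to, and that a given $\lambdab_t$ on $L'$ sees exactly the components attached to the hyperplane(s) that $L'$ crosses at $\lambdab_t$. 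Thus $H_{\lambdab}$ decomposes as a disjoint union $H_{\lambdab}=H_{\lambdab_1}\cup\cdots\cup H_{\lambdab_n}$ (as reduced divisors, with no shared components), where $H_{\lambdab_t}$ is the maximal jumping divisor of $\lambdab_t$ computed along $L'$.

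Next I would use the additivity of the formula in Theorem~\ref{thm:multiplicity1_cMMI}. Writing $D_{\lambdab}=\lceil K_\pi-\lambda_1 F_1-\cdots-\lambda_r F_r\rceil$, and noting that for $\lambdab_t$ close to $\lambdab$ the round-up $\lceil K_\pi-\sum (\lambda_t)_i F_i\rceil$ agrees with $D_{\lambdab}$ on the relevant components (since $\varepsilon$ is small and we are on the $\cC$-facet side, i.e.\ the round-ups only differ on components not adjacent to the ones we care about — this is where I would invoke Lemma~\ref{lem:sameMJD}-type periodicity and the fact that $\lambdab_t$ and $\lambdab$ differ by a small perturbation, not an integer translate, so more care is needed), the intersection numbers $(D+H_{\lambdab})\cdot E_i$ for $E_i\leqslant H_{\lambdab_t}$ coincide with $(D_t+H_{\lambdab_t})\cdot E_i$. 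Here $H_{\lambdab}$ and $H_{\lambdab_t}$ restrict to the same divisor near the support of $H_{\lambdab_t}$ because the extra components of $H_{\lambdab}$ lie in other connected pieces $H_{\lambdab_{t'}}$ which, being in different $\cC$-facets' divisors, are disjoint from $H_{\lambdab_t}$ and so contribute nothing to $(\,\cdot\,)\cdot E_i$. Summing the formula of Theorem~\ref{thm:multiplicity1_cMMI} over the connected components of $H_{\lambdab}$, grouped according to the partition into the $H_{\lambdab_t}$, gives
\[
m(\lambdab)=\sum_{t=1}^n\Bigl((D_{\lambdab}+H_{\lambdab})\cdot H_{\lambdab_t}+\#\{\text{conn.\ comp.\ of }H_{\lambdab_t}\}\Bigr)=\sum_{t=1}^n m(\lambdab_t)\,.
\]

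The main obstacle I anticipate is justifying precisely that the round-up divisor $D$ (equivalently, the antinef closure data entering the minimal jumping divisor description) behaves compatibly under the perturbation — that is, that $(D_{\lambdab}+H_{\lambdab})\cdot E_i = (D_{\lambdab_t}+H_{\lambdab_t})\cdot E_i$ for $E_i\leqslant H_{\lambdab_t}$. Since $\lambdab$ and $\lambdab_t$ are \emph{not} related by an integer translation (so Lemma~\ref{lem:sameMJD} does not apply directly), one must instead argue geometrically: for $\varepsilon$ small the antinef closures of $\lfloor\sum\lambda_i F_i-K_\pi\rfloor$ and of $\lfloor\sum(\lambda_t)_i F_i-K_\pi\rfloor$ differ only in coefficients of components that are ``irrelevant'' to the $\cC$-facets through $\lambdab$, and this is exactly the content of the $\cC$-facet analysis in \cite[Lemma 4.6]{ACAMDC16} together with the disjointness of the $H_{\lambdab_t}$. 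I would also need to verify the edge case where some $H_{\lambdab_t}$ might a priori share a component with $H_{\lambdab}$ coming from a \emph{different} facet; ruling this out uses that distinct $\cC$-facets through $\lambdab$ correspond to distinct supporting hyperplanes (or, when they share a hyperplane, to the $V_{j,\ell}=V_{i,\ell'}$ phenomenon noted after \eqref{eq:hyperplanes}, which still keeps the associated reduced divisors disjoint). Once this disjointness and round-up compatibility are in place, the identity follows by the additive bookkeeping above.
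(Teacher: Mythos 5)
There is a genuine gap at the heart of your additivity argument. You claim that, because the pieces of $H_{\lambdab}$ attached to different supporting hyperplanes through $\lambdab$ share no components, the other pieces ``contribute nothing to $(\,\cdot\,)\cdot E_i$'', so that $\left(\lceil K_\pi-\lambda_1F_1-\cdots-\lambda_rF_r\rceil+H_{\lambdab}\right)\cdot E_i=\left(\lceil K_\pi-\lambda_{t,1}F_1-\cdots-\lambda_{t,r}F_r\rceil+H_{\lambdab_t}\right)\cdot E_i$ for every $E_i\leqslant H_{\lambdab_t}$. This is false in general: having no common component does not prevent intersection, since components belonging to distinct hyperplanes through $\lambdab$ can be adjacent in the dual graph, so the cross terms $H_i\cdot H_j$ ($i\neq j$) are typically nonzero. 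Moreover the round-ups at $\lambdab$ and at the crossing points of $L'$ genuinely differ: in the paper's notation, $\lceil K_\pi-\lambda_1F_1-\cdots-\lambda_rF_r\rceil$ and $\lceil K_\pi-c_{i,1}F_1-\cdots-c_{i,r}F_r\rceil$ differ exactly by $H_{i+1}+\cdots+H_k$, the sum of the pieces attached to the hyperplanes not yet crossed, not by something supported away from $H_i$. The actual content of the proof is an aggregate cancellation, not a component-wise equality: expanding $\left(\lceil K_\pi-\lambda_1F_1-\cdots-\lambda_rF_r\rceil+H_{\lambdab}\right)\cdot H_{\lambdab}$ produces the cross terms $\sum_{i<j}H_iH_j$, and these cancel exactly against the defect in the count of connected components, via $\#\{\text{connected components of }H_{\lambdab}\}=\sum_i\#\{\text{connected components of }H_i\}-\sum_{i<j}H_iH_j$ (using that for an exceptional reduced divisor the number of connected components is the number of vertices minus the number of edges in the dual graph). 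Without this step your ``additive bookkeeping'' does not close; multiplicity is additive only after these corrections.

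A secondary, but also real, defect is the indexing of your decomposition. The pieces of $H_{\lambdab}$ are indexed by \emph{all} supporting hyperplanes $V_1,\dots,V_k$ through $\lambdab$ (equivalently, by all points where $L'$ meets them inside $B_\varepsilon(\lambdab)$), and some of these hyperplanes need not support a jumping wall. In that case the corresponding components of $H_{\lambdab}$ lie in no $H_{\lambdab_t}$, so your claimed disjoint union $H_{\lambdab}=H_{\lambdab_1}\cup\cdots\cup H_{\lambdab_n}$ can be a proper subdivisor of $H_{\lambdab}$. The paper sums over all $k$ crossing points ${\bf c}_i$, proves $m(\lambdab)=m({\bf c}_1)+\cdots+m({\bf c}_k)$, and only then discards the terms with ${\bf c}_i\notin{\rm\bf JW}_{\fab}$, which have multiplicity zero. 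Your instinct that the behaviour of the round-up under the perturbation is the main obstacle is correct, but the resolution is not that the round-ups agree on the relevant components (they do not); it is the explicit relation between them quoted above combined with the connected-component cancellation.
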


\begin{proof}

Let $V_1,\dots, V_k$ be all the hyperplanes associated to exceptional divisors that contain the jumping point $\lambdab$.
For each hyperplane $V_i$ we consider the divisor
$H_i = \sum_j  E_j$ where the sum is taken over the exceptional divisors that support the hyperplane $V_i$, i.e.  for all
$E_j \leq H_i$ there exists some $\ell_j \in \bZ_{>0}$ such that
the hyperplane $V_i$ is of the form $e_{1,j} z_1+ \cdots +  e_{r,j} z_r= \ell_j + k_j$.
Notice that, even though it is possible that not all of these hyperplanes support a jumping wall, we have a decomposition of the
maximal jumping divisor as $H_{\lambdab}=H_1 + \cdots + H_k$. Let $\{{\bf c}_1, \dots , {\bf c}_k\}$ be the ordered\footnote{The order
on the set of points $\{{\bf c}_1, \dots , {\bf c}_k\}$ is given by their distance to the origin. We order the hyperplanes $V_1,\dots, V_k$
accordingly.} set of points
resulting from the intersection of the ray $L'$ with the  hyperplanes $V_1,\dots, V_k$. Notice that we have
$L'\cap {\rm \bf JW}_{\fab} \cap B_\varepsilon(\lambdab) =\{\lambdab_1, \dots , \lambdab_n\} \subseteq
\{{\bf c}_1, \dots , {\bf c}_k\}$.

\vskip 2mm

For each ${\bf c}_i=({ c}_{i,1}, \dots , { c}_{i,r})$  we may find
a point $(1-\varepsilon'){\bf c}_i := ((1-\varepsilon'_1){c}_{i,1}, \dots , (1-\varepsilon'_r){ c}_{i,r})$ over the ray $L'$ that is close enough but smaller than ${\bf c}_1$ and a point  over the ray $L$ and smaller than $\lambdab $  that we will denote as
$(1-\varepsilon)\lambdab := ((1-\varepsilon_1)\lambda_1, \dots , (1-\varepsilon_r)\lambda_r)$ satisfying
$$\lceil K_{\pi} - (1-\varepsilon_1)\lambda_1 F_1 - \cdots - (1-\varepsilon_r)\lambda_rF_r\rceil = \lceil K_{\pi} - (1-\varepsilon'_1){ c}_{i,1} F_1 - \cdots - (1-\varepsilon'_r){ c}_{i,r}  F_r\rceil.$$

From the  construction of the hyperplanes $V_i$ we have:

\vskip 2mm

$\cdot$  $\lceil K_{\pi} - \lambda_1 F_1 - \cdots - \lambda_rF_r\rceil = \lceil K_{\pi} - (1-\varepsilon_1)\lambda_1 F_1 - \cdots - (1-\varepsilon_r)\lambda_rF_r\rceil + H_{\lambdab}.$

\vskip 2mm

$\cdot$ $\lceil K_{\pi} - { c}_{i,1} F_1 - \cdots - {c}_{i,r}  F_r\rceil = \lceil K_{\pi} - (1-\varepsilon'_1){c}_{i,1} F_1 - \cdots - (1-\varepsilon'_r){c}_{i,r}  F_r\rceil + H_1 + \cdots + H_i.$

\vskip 2mm

Therefore
\begin{equation} \label{epsilon}
\lceil K_{\pi} - \lambda_1 F_1 - \cdots - \lambda_rF_r\rceil - \lceil K_{\pi} - {c}_{i,1} F_1 - \cdots - { c}_{i,r}  F_r\rceil = H_{i+1} + \cdots H_k.
\end{equation}

\vskip 2mm

By Theorem \ref{thm:multiplicity1_cMMI}, one has
\begin{multline*}
m(\lambdab)  = \left(\left\lceil K_\pi-\lambda_1 F_1-\cdots-\lambda_r F_r\right\rceil +
H_{\lambdab}\right)\cdot H_{\lambdab} +\#\{\text{connected components
of } H_{\lambdab}\}\,.
\end{multline*}
Thus, we can rewrite this formula as
\begin{align*}
 m(\lambdab)&= \sum_{i=1}^k (\lceil K_{\pi} - \lambda_1 F_1 - \cdots - \lambda_rF_r\rceil+H_{i})\cdot H_i\\
 &\qquad+\sum_{i=1}^k\sum_{\substack{j=1 \\ j\neq i}}^k H_iH_j+\#\{\text{connected components of }H_{\lambdab}\} \\
 &= \sum_{i=1}^k(\lceil K_{\pi} - { c}_{i,1} F_1 - \cdots - { c}_{i,r}F_r\rceil+H_{i})\cdot H_i\\
 &\qquad+\sum_{i=1}^k\sum_{j>i}^k H_iH_j+\#\{\text{connected components of }H_{\lambdab}\}\,
\end{align*}

\noindent where the last equality follows from Equation \ref{epsilon}.  Now, recall that for any divisor $D$ with exceptional support
$\#\{\text{connected components of }D\}=v_{D}-a_{D},$
where $v_{D}$ and $a_{D}$ denote the number of vertices and edges of $D$
in the dual graph. Since $v_{H_{\lambdab}}=v_{H_1}+\cdots+v_{H_k}$ and $a_{H_{\lambdab}}=a_{H_1}+\cdots+a_{H_k}+\sum_{i=1}^k\sum_{j>i}^k H_iH_j$ we deduce
\begin{align*}
     \#\{\text{\small{connected components of }}H_{\lambdab}\}
      =\sum_{i=1}^k\#\{\text{\small{connected components of }}H_i\}
      -\sum_{i=1}^k\sum_{j>i}^k H_iH_j .
\end{align*}
Therefore
\begin{align*}
m(\lambdab) & = \sum_{i=1}^k \left[ (\lceil K_{\pi} - {c}_{i,1} F_1 - \cdots - {c}_{i,r}F_r\rceil+H_{i})\cdot H_i  + \#\{\text{connected components of }H_i\} \right] \\
&= m({\bf c}_1)+ \cdots + m({\bf c}_k) .
\end{align*}

The only points with non zero multiplicity are those over a jumping wall, namely the jumping points $\{\lambdab_1, \dots , \lambdab_n\}$
and thus we get the desired result.
\end{proof}

\section{Contribution to the log-canonical wall} \label{Sec4}

Let $X$ be a smooth complex surface and  $\fa\subseteq \Oc_{X,O}$ an ideal. A common theme in the
study of multiplier ideals is to check which exceptional divisors contribute to the jumping numbers of $\fa$.
In the case of the  log-canonical threshold we know that it is  described by the formula
$${\rm lct}(\fa)= \min_i\left\{\frac{k_i+1}{e_i}\right\}.$$
In the case that $\fa$ is $\m$-primary and simple, this minimum is achieved at  the first
rupture or dicritical  exceptional component, starting from the origin, in the dual graph of the log-resolution of $\fa$
(see \cite{Jar11}, \cite{Tuc10}). For non simple ideals we may find
some analogous statements in  \cite{Kuw99}, \cite{GHM12}, \cite{ACNLM08}, \cite{AN10}.

\vskip 2mm

For the case of mixed multiplier ideals, Cassou-Nogu\`es and Libgober \cite[Theorem 4.22]{CNL14} studied the contribution
of exceptional divisors to the log-canonical wall for the case where the tuple of ideals corresponds to the branches
of a plane curve.
%
%
In this section we will give a generalization of their result that works for general
tuples of $\fM$-primary ideals  $\fab=\left(\fa_1,\ldots,\fa_r \right)\subseteq \left(\Oc_{X,O}\right)^r$,
where $X$ is a complex surface with a rational singularity at $O$ and the points in the log-canonical wall
have multiplicity one.

\vskip 2mm

Their result is described in terms of the so-called {\it Newton nest} introduced in \cite[Definition 4.19]{CNL14}. In order to give a generalization to our setup of the Newton nest we will need to fix some notation.
When $X$ has a rational singularity  we may have an strict inclusion
$\Oc_{X,O}\varsupsetneq\J(\fab^{\bf 0})$ where ${\bf 0}=(0,\dots, 0)$ is the origin of the positive orthant $\bR^r_{\geqslant 0}$.
Indeed,  the mixed multiplier ideal $\J(\fab^{\bf 0})$  is described by a divisor $D_{\bf 0}=\sum e_j^{\bf 0} E_j$
which is the antinef closure of $\lfloor  - K_\pi \rfloor$ that can be computed using the {\it unloading procedure}
described in \cite{ACAMDC13}. Therefore, the log-canonical wall is supported on hyperplanes of the form
\[  e_{1,j} z_1 + \cdots +  e_{r,j} z_r = k_j + 1 + e_j^{{\bf 0}}\,,  \hskip 8mm j=1,\dots,s.\]
For each point $\pmb{z}_i=(0,\ldots,0,{\rm lct}(\fa_i),0,\ldots,0)$  in the
$i$-th coordinate axis corresponding to the log-canonical threshold of the ideal $\fa_i$, $i=1,\dots, r$, we
consider the reduced divisor $G'_{\pmb{z}_i}= \sum E_j$,  where the sum is taken over those exceptional divisors
associated to the supporting hyperplanes  of the log-canonical wall which contain the point ${\pmb{z}_i}$. Notice that this divisor
is contained in the minimal jumping divisor of ${\pmb{z}_i}$, that is $G'_{\pmb{z}_i}\leqslant G_{\pmb{z}_i}$.

\vskip 2mm

%

\begin{definition} \label{Newton}
 Consider the minimal connected subgraph $\Gamma_{\fab}'$ of the dual graph $\Gamma_{\fab}$
 containing the divisors $G'_{\pmb{z}_i}$, for $i=1\dots, r$. The {\it Newton nest} of $\Gamma_{\fab}$
 is the set of rupture or dicritical divisors belonging to $\Gamma'_{\fab}$.
\end{definition}

\begin{remark}
 In the case that $X$ is smooth and the ideals $\fab_i$  are simple,  this definition coincides with the one
given  by Cassou-Nogu\`es and Libgober in \cite[Definition 4.19]{CNL14} since in this case
we have that $G'_{\pmb{z}_i}= G_{\pmb{z}_i}=E_{j_i}$, where $E_{j_i}$ is the rupture divisor in the dual graph
$\Gamma_{\fa_i}$ which is closest to its root.
 \end{remark}

\vskip 2mm

Cassou-Nogu\`es and Libgober  \cite[Theorem 4.22]{CNL14} established a one-to-one correspondence between the divisors of the Newton nest and the $\cC$-facets of the log-canonical wall in the case where $X$ is smooth and the tuple of ideals correspond to the branches of a plane curve. The only restriction that we are going to impose in our generalization is that the multiplicity of all the points in the log-canonical wall have multiplicity one. This condition is achieved, for example, in the case that $X$ has a log-terminal singularity at $O\in X$.

\begin{lemma}
Let
$\fab=\left(\fa_1,\ldots,\fa_r\right)\subseteq \left(\cO_{X,O}\right)^r$ be a tuple of simple $\fM$-primary ideals
and $X$ is a complex surface with a log-terminal singularity. Then, all the points in the log-canonical wall
have multiplicity one.
\end{lemma}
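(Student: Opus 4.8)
The plan is to prove the sharper statement that $\mathcal{J}(\fab^{\lambdab})=\fM$ for every $\lambdab$ on the log-canonical wall, which gives $m(\lambdab)=\dim_{\bC}\mathcal{O}_{X,O}/\fM=1$ immediately. I would first record two elementary facts about push-forwards of exceptional divisors. For a divisor $A=\sum_j a_jE_j$ supported on the exceptional locus, a section of $\mathcal{O}_{X'}(A)$ over a neighbourhood of $O$ is a function $f\in\mathcal{O}_{X,O}$ with $v_j(f)\geq -a_j$ for all $j$, where $v_j$ is the divisorial valuation of $E_j$; hence $\pi_*\mathcal{O}_{X'}(A)=\mathcal{O}_{X,O}$ precisely when $A\geq 0$. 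Moreover, for any exceptional $E_j$ the valuation $v_j$ is centered at $O$, so $\{f\in\mathcal{O}_{X,O}:v_j(f)\geq 1\}=\fM$. Now log-terminality enters: $k_j>-1$ for all $j$, so $\lceil K_{\pi}\rceil\geq 0$ and $\mathcal{J}(\fab^{\mathbf 0})=\pi_*\mathcal{O}_{X'}(\lceil K_{\pi}\rceil)=\mathcal{O}_{X,O}$; consequently $\mathcal{R}_{\fab}(\mathbf 0)=\{\mathbf z\in\bR^r_{\geqslant0}:\mathcal{J}(\fab^{\mathbf z})=\mathcal{O}_{X,O}\}=\{\mathbf z:\sum_i z_ie_{i,j}<k_j+1 \text{ for all } j\}$, i.e.\ $e_j^{\mathbf 0}=0$ for all $j$ and the log-canonical wall lies on the hyperplanes $\sum_i e_{i,j}z_i=k_j+1$.

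Next, fix $\lambdab$ on the log-canonical wall $\partial\mathcal{R}_{\fab}(\mathbf 0)$. Passing to the limit in the defining strict inequalities gives $\sum_i\lambda_ie_{i,j}\leq k_j+1$ for all $j$, with equality for a nonempty set $S$ of indices. Applying the strict inequalities to $(1-\varepsilon)\lambdab$ shows $\mathcal{J}(\fab^{(1-\varepsilon)\lambdab})=\mathcal{O}_{X,O}$ for small $\varepsilon>0$, so that $m(\lambdab)=\dim_{\bC}\mathcal{O}_{X,O}/\mathcal{J}(\fab^{\lambdab})$. On the other hand, the coefficient of $\lceil K_{\pi}-\lambda_1F_1-\cdots-\lambda_rF_r\rceil$ along $E_j$ is $\lceil k_j-\sum_i\lambda_ie_{i,j}\rceil$, which is $\geq -1$ and equals $-1$ exactly for $j\in S$. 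By the description of sections above, $\mathcal{J}(\fab^{\lambdab})=\pi_*\mathcal{O}_{X'}(\lceil K_{\pi}-\sum_i\lambda_iF_i\rceil)$ equals $\{f\in\mathcal{O}_{X,O}:v_j(f)\geq 1 \text{ for all } j\in S\}$. Since $S\neq\emptyset$ and each set $\{f:v_j(f)\geq 1\}$ is $\fM$, the intersection is $\fM$, and therefore $m(\lambdab)=1$.

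The argument is essentially formal; the only place where a genuine hypothesis is used is the reduction $e_j^{\mathbf 0}=0$, which is exactly what prevents the coefficients of $\lceil K_{\pi}-\sum_i\lambda_iF_i\rceil$ from dropping below $-1$ along the wall — for a non-log-terminal singularity one can have $e_j^{\mathbf 0}>0$ and points of the log-canonical wall whose multiplier ideal is properly contained in $\fM$, so $m>1$. (Simplicity of the $\fa_i$ is not needed for this particular statement; it is in force only because Section~\ref{Sec4} runs under that hypothesis.) The mild point to check along the way is that every point of $\partial\mathcal{R}_{\fab}(\mathbf 0)$ is indeed a jumping point with $\mathcal{J}(\fab^{\lambdab})\subsetneq\mathcal{O}_{X,O}$: this holds because any $\cn'\leqslant\lambdab$ with $\cn'\neq\lambdab$ satisfies $\sum_i c'_ie_{i,j}<\sum_i\lambda_ie_{i,j}\leq k_j+1$ and hence lies in $\mathcal{R}_{\fab}(\mathbf 0)$ — here one uses that all $e_{i,j}>0$, valid since the $\fa_i$ are $\fM$-primary.
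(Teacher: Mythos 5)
Your proof is correct, and it reaches the same intermediate statement as the paper -- that $\J\left(\fab^{\lambdab}\right)=\fM$ and $\J\left(\fab^{(1-\varepsilon)\lambdab}\right)=\cO_{X,O}$ on the log-canonical wall -- but by a genuinely different mechanism. The paper sandwiches: log-terminality forces the antinef closure of $\lfloor -K_\pi\rfloor$ to vanish, so the ideal at the origin is the whole ring; then for a wall point all coefficients of $\lfloor \lambda_1F_1+\cdots+\lambda_rF_r-K_\pi\rfloor$ are at most $1$, hence this divisor is $\leqslant Z$, and Artin's identity $\fM=\pi_*\cO_{X'}(-Z)$ for the fundamental cycle gives $\fM\subseteq\J\left(\fab^{\lambdab}\right)\subsetneq\cO_{X,O}$, forcing equality with $\fM$. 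You instead compute $\J\left(\fab^{\lambdab}\right)$ head-on as the set of germs satisfying the valuation inequalities coming from $\lceil K_\pi-\lambda_1F_1-\cdots-\lambda_rF_r\rceil$, observe that on the wall every coefficient is $\geqslant -1$, and use that each single condition $v_j(f)\geqslant 1$ already cuts out exactly $\fM$ because every exceptional valuation is centered at $O$; this bypasses the fundamental cycle, Artin's theorem and the antinef-closure/Theorem 3.3 machinery entirely, at the cost of re-deriving the description of $\cR_{\fab}(\pmb{0})$ by all the hyperplanes $\sum_i z_ie_{i,j}<k_j+1$ (which is where log-terminality enters for you, exactly as it does for the paper). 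Two small remarks: the set of indices where the rounded coefficient equals $-1$ is $\{j: k_j<\sum_i\lambda_ie_{i,j}\leqslant k_j+1\}$, which may be strictly larger than your equality set $S$ -- harmless, since any nonempty collection of conditions $v_j(f)\geqslant 1$ still cuts out $\fM$; and the nonemptiness of this set should be read off from the convention (used implicitly by the paper too) that wall points are jumping points, i.e. lie on some supporting hyperplane, since literal boundary points of $\cR_{\fab}(\pmb{0})$ on coordinate hyperplanes (such as the origin) are not wall points. Your observation that simplicity of the $\fa_i$ is never used is accurate; the paper's proof does not use it either.
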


\begin{proof}
 From the definition of log-terminal singularity, it follows that the antinef closure of $\lfloor -K_{\pi}\rfloor$ is $0$
 because all the coefficients of $\lfloor -K_{\pi}\rfloor$ are strictly smaller than one. Therefore the ideal associated
 to the point ${\pmb 0}$ is the whole ring.

 \vskip 2mm

 Let $\lambdab$ be a jumping point in  the log-canonical wall. All the coefficients of the divisor
 $\lfloor \lambdab F -K_{\pi} \rfloor$ must be smaller or equal to one so we have
 $\lfloor \lambdab F -K_{\pi} \rfloor \leqslant Z$ where $Z$ is the fundamental cycle. Therefore we have
 $\fM=\pi_*\Oc_{X'}(-Z)\subseteq \J\left(\fab^{\lambdab}\right) \subsetneq \cO_{X,O}$. So $ \J\left(\fab^{\lambdab}\right)=\fM$,
 and consequently $m(\lambdab)= 1$ for all points in the log-canonical wall.
%
%
\end{proof}

\vskip 2mm

Before stating the main result of this section  we will present some properties concerning  jumping points of multiplicity one.
This is a very restrictive condition on the corresponding minimal jumping divisors. To such purpose we have to introduce
some technical notation. Given any exceptional component $E_i$ and a reduced  divisor
$D \leqslant E =Exc(\pi) $, we define the set of components adjacent to $E_i$ {\em inside} $D$ and its number as:
$$\adj_D\left(E_i\right) = \left\{E_j \leqslant D \hskip 2mm | \hskip 2mm E_i \cdot E_j = 1\right\} \quad \text{and} \quad a_D\left(E_i\right) = \#\adj_D\left(E_i\right)$$

\begin{lemma}\label{lem:1divisor}
 Let $\fab=\left(\fa_1,\ldots,\fa_r\right) \subseteq \left(\cO_{X,O}\right)^r$ be a tuple of $\fM$-primary ideals and $\lambdab$ a jumping point such that $m(\lambdab)=1$. Then, the minimal jumping divisor $G_{\lambdab}$  has only one connected component and no rupture or dicritical divisor $E_i$ such that $a_{G_{\lambdab}}(E_i)>1$.
\end{lemma}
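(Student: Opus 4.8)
The plan is to read $m(\lambdab)$ off the minimal jumping divisor via Proposition \ref{eq:multiplicity1_MMI}, and then tighten the configuration using the very minimality of $G_{\lambdab}$. Write $M:=\lceil K_\pi-\lambda_1F_1-\cdots-\lambda_rF_r\rceil$ and $\mathcal L:=\Oc_{G_{\lambdab}}(M+G_{\lambdab})$. The observation I would start from is that, for \emph{any} reduced subdivisor $D\leqslant G_{\lambdab}$, pushing $0\to\Oc_{X'}(M)\to\Oc_{X'}(M+D)\to\Oc_D(M+D)\to0$ forward to $X$ and applying local vanishing to $\Oc_{X'}(M)$ gives
\[\dim_{\bC}\frac{\pi_*\Oc_{X'}(M+D)}{\J(\fab^{\lambdab})}=h^0\!\left(D,\Oc_D(M+D)\right),\]
a number which is $\leqslant m(\lambdab)=1$ since $\J(\fab^{\lambdab})\subseteq\pi_*\Oc_{X'}(M+D)\subseteq\pi_*\Oc_{X'}(M+G_{\lambdab})=\J(\fab^{(1-\varepsilon)\lambdab})$ (see \cite{ACAMDC16}), and which, if it equalled $1$, would force $\pi_*\Oc_{X'}(M+D)=\J(\fab^{(1-\varepsilon)\lambdab})$, contradicting the minimality of $G_{\lambdab}$ whenever $D\subsetneq G_{\lambdab}$. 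Hence $h^0(D,\Oc_D(M+D))=0$ for every proper reduced $D\subsetneq G_{\lambdab}$. In particular, if $G_{\lambdab}$ had connected components $H_1,\dots,H_t$ with $t\geqslant2$, each $H_a$ would be a proper reduced subdivisor, so $h^0(H_a,\mathcal L|_{H_a})=h^0(H_a,\Oc_{H_a}(M+H_a))=0$, whence $m(\lambdab)=h^0(G_{\lambdab},\mathcal L)=\sum_ah^0(H_a,\mathcal L|_{H_a})=0$, absurd. Thus $G_{\lambdab}$ is connected, hence is a subtree of the tree $E$, with $(M+G_{\lambdab})\cdot G_{\lambdab}=m(\lambdab)-1=0$ and $h^0(G_{\lambdab},\mathcal L)=1$.

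For the second assertion I would argue by contradiction: suppose $E_{i_0}\leqslant G_{\lambdab}$ is rupture or dicritical with $p:=a_{G_{\lambdab}}(E_{i_0})\geqslant2$. Deleting $E_{i_0}$ splits the subtree $G_{\lambdab}$ into subtrees $G^{(1)},\dots,G^{(p)}$, each meeting $E_{i_0}$ at one node $x_q$; since $G^{(q)}\subsetneq G_{\lambdab}$ and $\mathcal L|_{G^{(q)}}=\Oc_{G^{(q)}}(M+G^{(q)})(x_q)$, the first paragraph gives $h^0(G^{(q)},\mathcal L|_{G^{(q)}})\in\{0,1\}$. Regrouping the identity $h^0(G_{\lambdab},\mathcal L)=\sum_{E_i\leqslant G_{\lambdab}}h^0(E_i,\mathcal L|_{E_i})-a_{G_{\lambdab}}$ (valid because $\deg_{E_i}\mathcal L\geqslant-1$ by Proposition \ref{prop:Num_condition_cMMI}) around $E_{i_0}$ yields $\sum_qh^0(G^{(q)},\mathcal L|_{G^{(q)}})=p-\deg_{E_{i_0}}\mathcal L$, while applying the first paragraph to $D=E_{i_0}$ gives $\deg_{E_{i_0}}\mathcal L\leqslant p-1$. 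Using that $G_{\lambdab}$ is contained in the maximal jumping divisor $H_{\lambdab}$ (so $M$ and $K_\pi-\lambda_1F_1-\cdots-\lambda_rF_r$ agree along $E_{i_0}$), together with $(K_\pi+E_{i_0})\cdot E_{i_0}=-2$ and $F_\ell\cdot E_{i_0}=-\rho_{\ell,i_0}$, one computes
\[\deg_{E_{i_0}}\mathcal L=-2+\sum_{\ell=1}^r\lambda_\ell\rho_{\ell,i_0}+\sum_{E_j\in\adj_E(E_{i_0})}\bigl\{\textstyle\sum_\ell\lambda_\ell e_{\ell,j}-k_j\bigr\}+p,\]
so $\sum_\ell\lambda_\ell\rho_{\ell,i_0}+\sum_{E_j\in\adj_E(E_{i_0})}\{\sum_\ell\lambda_\ell e_{\ell,j}-k_j\}$ is a non-negative integer, necessarily $0$ or $1$.

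If that integer is $1$ — which it is whenever $E_{i_0}$ is dicritical, as then $\sum_\ell\lambda_\ell\rho_{\ell,i_0}>0$ — then $\deg_{E_{i_0}}\mathcal L=p-1$, so exactly one $G^{(q)}$, say $G^{(1)}$, carries a section; one then checks that $D=E_{i_0}\cup G^{(1)}\subsetneq G_{\lambdab}$ has $\Oc_D(M+D)$ of degree $0$ along $E_{i_0}$ and equal to $\mathcal L|_{G^{(1)}}$ along $G^{(1)}$, whence $h^0(D,\Oc_D(M+D))=1=m(\lambdab)$, contradicting the first paragraph. If the integer is $0$, then $E_{i_0}$ is not dicritical, hence is a rupture, and $\deg_{E_{i_0}}\mathcal L=p-2$; then two of the $G^{(q)}$ carry a section and, if $p\geqslant3$, the same contradiction follows from $D=E_{i_0}\cup G^{(1)}\cup G^{(2)}\subsetneq G_{\lambdab}$.

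The main obstacle is the leftover case: $E_{i_0}$ a rupture with $a_{G_{\lambdab}}(E_{i_0})=2$ and $\deg_{E_{i_0}}\mathcal L=0$, i.e. all ($\geqslant3$) neighbours $E_j$ of $E_{i_0}$ in $E$ satisfy $\sum_\ell\lambda_\ell e_{\ell,j}-k_j\in\bZ$ while the third of them, $E_{j_0}$, does not lie in $G_{\lambdab}$. Here every proper subdivisor genuinely has no section, so the cohomological squeeze alone cannot conclude, and one must feed in the combinatorics of Definition \ref{def:minimal}: $E_{j_0}\notin G_{\lambdab}$ forces $e_{j_0}^{(1-\varepsilon)\lambdab}>\lfloor\sum_\ell(1-\varepsilon)\lambda_\ell e_{\ell,j_0}-k_{j_0}\rfloor$, i.e. the unloading that produces $D_{(1-\varepsilon)\lambdab}$ deposited extra mass on $E_{j_0}$, whereas no extra mass was deposited on $E_{i_0}\in G_{\lambdab}$; tracking this through the edge joining $E_{i_0}$ and $E_{j_0}$ should show that $E_{i_0}$ cannot sit interior to $G_{\lambdab}$ along two of its branches. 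Isolating this last fact inside \cite{ACAMDC16} — in effect, that for $m(\lambdab)=1$ the minimal jumping divisor reaches a rupture or dicritical component through at most one branch — is where I expect the real work; everything else is the routine interplay of Proposition \ref{eq:multiplicity1_MMI}, minimality, and the degree identity above.
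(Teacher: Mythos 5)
Your cohomological squeeze does prove the first assertion, and by a route genuinely different from the paper's (the paper reads connectedness and the vanishing of each intersection number directly off the formula of Proposition \ref{eq:multiplicity1_MMI} together with the per-component inequality of \cite[Proposition 4.13]{ACAMDC16}, while you use $h^0(D,\Oc_D(M+D))=0$ for every proper reduced $D\subsetneq G_{\lambdab}$, obtained from local vanishing plus the minimality of $G_{\lambdab}$ as a divisor realizing $\J(\fab^{(1-\varepsilon)\lambdab})$ -- a property you use but should cite from \cite{ACAMDC16}). The second assertion, however, is not proved: as you yourself point out, your argument cannot handle a rupture component $E_{i_0}\leqslant G_{\lambdab}$ with $a_{G_{\lambdab}}(E_{i_0})=2$, $\sum_\ell\lambda_\ell\rho_{\ell,i_0}=0$ and $\lambda_1e_{1,j}+\cdots+\lambda_re_{r,j}-k_j\in\bZ$ for every neighbour $E_j$ of $E_{i_0}$, so that $\deg_{E_{i_0}}\mathcal{L}=0$. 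In that configuration every proper subdivisor of $G_{\lambdab}$ genuinely has no sections (your own computation with $D=E_{i_0}\cup G^{(1)}$ gives degree $-1$ on $E_{i_0}$ and total $h^0=0$), so the interplay of minimality, the degree identity and $h^0\leqslant 1$ -- which is all your method has -- yields no contradiction. Excluding exactly this case is the content of the strict inequality $\left(\left\lceil K_\pi-\lambda_1F_1-\cdots-\lambda_rF_r\right\rceil+G_{\lambdab}\right)\cdot E_{i_0}>0$ that the paper's proof invokes through \cite[Lemma 4.11 and Proposition 4.13]{ACAMDC16}; you only gesture at recovering it from the unloading combinatorics behind Definition \ref{def:minimal} (``this is where I expect the real work''), so the proof is missing precisely the step that carries the lemma.

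Two secondary points. Your claim that the dicritical case always lands in the ``integer equal to $1$'' branch uses $\sum_\ell\lambda_\ell\rho_{\ell,i_0}>0$, which fails when the positive excesses of $E_{i_0}$ occur only at indices $\ell$ with $\lambda_\ell=0$ (jumping points on coordinate hyperplanes); that subcase then falls into the same unresolved pocket, so the gap is not confined to rupture components. Also, the repeated use of the decomposition $h^0(D,\Oc_D(M+D))=\sum_{E_i\leqslant D}h^0(E_i,\cdot)-a_D$ for the various subdivisors $D$ you introduce needs the degree bound $\geqslant-1$ on each component of each such $D$ (not just of $G_{\lambdab}$, cf.\ Proposition \ref{prop:Num_condition_cMMI}); this holds in your situations but is nowhere checked.
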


\begin{proof}
Using Proposition \ref{eq:multiplicity1_MMI}  we have  the following formula.
$$
m(\lambdab)  = \left(\left\lceil K_\pi-\lambda_1 F_1-\cdots-\lambda_r F_r\right\rceil +
G_{\lambdab}\right)\cdot G_{\lambdab}+\#\{\text{connected components
of } G_{\lambdab}\}\,.
$$
In the case that $m(\lambdab)=1$
we can deduce that
$\#\{\text{connected components of } G_{\lambdab}\}=1$ and
\begin{align*}
      \left(\left\lceil K_\pi-\lambda_1 F_1-\cdots-\lambda_r F_r\right\rceil +
G_{\lambdab}\right)\cdot G_{\lambdab}=\sum_{E_i\leqslant G_{\lambdab}} \left(\left\lceil K_\pi-\lambda_1 F_1-\cdots-\lambda_r F_r\right\rceil +
G_{\lambdab}\right)\cdot E_i=0
\end{align*}
since we already had $\left(\left\lceil K_\pi-\lambda_1 F_1-\cdots-\lambda_r F_r\right\rceil +
G_{\lambdab}\right)\cdot G_{\lambdab}\geqslant 0$  by   \cite[Proposition 4.13]{ACAMDC16}.
Indeed, using again  this result we have
\[\left(\left\lceil K_\pi-\lambda_1 F_1-\cdots-\lambda_r F_r\right\rceil +
G_{\lambdab}\right)\cdot E_i=0\] for all $E_i\leqslant G_{\lambdab}$.
We may provide a more explicit description of this equation using  \cite[Lemma 4.11]{ACAMDC16}. Namely
we have
\begin{multline*}
\left(\left\lceil K_\pi-  \lambda_1  F_1-\cdots-\lambda_r F_r\right\rceil  + G_{\lambdab}\right)\cdot E_i  =  \\
 =  -2 + \lambda_1 \rho_{1,i}+\cdots+ \lambda_r \rho_{r,i} +a_{G_{\lambdab}}\left(E_i\right) + \sum_{ E_j \in \adj_E(E_i)} \left\{\lambda_1 e_{1,j}+\cdots+\lambda_r e_{r,j}-k_j\right\}  \,.
\end{multline*}
Thus, if $E_i$ is a rupture or dicritical component with $a_{G_{\lambdab}}\left(E_i\right)>1$, then we have \[\left(\left\lceil K_\pi-\lambda_1  F_1-\cdots-\lambda_r F_r\right\rceil + G_{\lambdab}\right)\cdot E_i>0\,\]  so we get a contradiction and the result follows.
\end{proof}

\begin{corollary}\label{cor:Cfacet}
Let $\lambdab \in \bR^r_{\geqslant 0}$ be a jumping point not contained in any coordinate hyperplane such that  $m(\lambdab)=1$. Then:

 \begin{itemize}
 \item[i)] If  $\lambdab$ is an interior  point of a $\cC$-facet which does not intersect any other $\cC$-facet,
  the minimal jumping divisor $G_{\lambdab}$ contains at most two dicritical or rupture divisors.
   \item[ii)] If  $\lambdab$ is an interior  point of a $\cC$-facet which intersects, at least, another $\cC$-facet,
  the minimal jumping divisor $G_{\lambdab}$ is a dicritical or rupture divisor.
  \item[iii)] If $\lambdab$ is at the intersection of two  $\cC$-facets,  the minimal jumping divisor $G_{\lambdab}$
  is connected and contains exactly two dicritical or rupture divisors, which are its two ends.

 \end{itemize}

\end{corollary}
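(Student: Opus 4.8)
The plan is to deduce all three statements from Lemma~\ref{lem:1divisor} together with the structural description of $\cC$-facets from \cite{ACAMDC16}. The key point already established is that when $m(\lambdab)=1$ the minimal jumping divisor $G_{\lambdab}$ is connected, satisfies $\left(\left\lceil K_\pi-\lambda_1 F_1-\cdots-\lambda_r F_r\right\rceil + G_{\lambdab}\right)\cdot E_i = 0$ for every $E_i \leqslant G_{\lambdab}$, and contains no rupture or dicritical component $E_i$ with $a_{G_{\lambdab}}(E_i)>1$. The last condition forces $G_{\lambdab}$ to be a chain (a string in the dual graph) as soon as it is required to be connected, since any vertex of valence $\geqslant 3$ in $G_{\lambdab}$ would be a rupture component of $E$ with $a_{G_{\lambdab}}(E_i) \geqslant 3 > 1$. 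So $G_{\lambdab}$ is a connected chain whose interior vertices are all non-rupture, non-dicritical divisors, and the only places where rupture or dicritical divisors can sit are the two endpoints of the chain (or $G_{\lambdab}$ is a single vertex).

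Next I would recall from \cite[Lemma 4.6 and the surrounding discussion]{ACAMDC16} how the minimal jumping divisor of a $\cC$-facet relates to its position: the number of rupture/dicritical divisors in $G_{\lambdab}$ governs how many supporting hyperplanes $V_{j,\ell}$ pass through $\lambdab$, hence the dimension of the stratum of the jumping wall on which $\lambdab$ lies. Concretely, a $\cC$-facet is a codimension-one piece of the jumping wall cut out by a single supporting hyperplane associated to a rupture or dicritical divisor, and its interior points lie on exactly one such hyperplane; the intersection of two $\cC$-facets lies on (at least) two. For (ii): if $\lambdab$ is interior to a $\cC$-facet that meets another $\cC$-facet, then by \cite{ACAMDC16} (the minimal jumping divisor of the $\cC$-facet being the one ``seen'' from that facet) $G_{\lambdab}$ must be exactly the single rupture or dicritical divisor defining that facet — any extra rupture/dicritical end would, again by the equality $(\cdots)\cdot E_i = 0$ and \cite[Lemma 4.11]{ACAMDC16}, force $\lambdab$ to lie on a further hyperplane and thus not be interior to just one facet meeting one other. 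For (i): if the $\cC$-facet meets no other $\cC$-facet, there is more freedom and $G_{\lambdab}$ may be a chain with up to two rupture/dicritical endpoints — more than two is excluded by the chain structure above (only the two ends can carry such divisors). For (iii): if $\lambdab$ lies at the intersection of two $\cC$-facets, it sits on the two hyperplanes corresponding to the two rupture/dicritical divisors $E$ and $E'$ defining those facets; since $m(\lambdab)=1$ keeps $G_{\lambdab}$ connected and a chain, $E$ and $E'$ must both lie in $G_{\lambdab}$, and by the endpoint restriction they are precisely its two ends — so $G_{\lambdab}$ contains exactly two dicritical or rupture divisors, namely its extremities.

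The main obstacle will be matching our notion of $\cC$-facet and its minimal jumping divisor precisely to the machinery of \cite{ACAMDC16}: one must verify that ``$\lambdab$ interior to a $\cC$-facet'' translates to ``$\lambdab$ lies on exactly one supporting hyperplane $V_{j,\ell}$ attached to a rupture or dicritical divisor and $G_{\lambdab}$ is the minimal jumping divisor of that facet,'' and that ``intersection of two $\cC$-facets'' translates to ``$\lambdab$ lies on exactly two such hyperplanes.'' Granting these translations, each of (i), (ii), (iii) is then a short combinatorial deduction: count the rupture/dicritical vertices of the chain $G_{\lambdab}$ and observe they can only be endpoints. I would therefore organize the proof by first proving the ``$G_{\lambdab}$ is a chain with rupture/dicritical vertices only at its ends'' statement as an immediate corollary of Lemma~\ref{lem:1divisor}, then handle (ii), (iii), (i) in that order, each time using the number of supporting hyperplanes through $\lambdab$ to pin down the number of ends of the chain that are rupture or dicritical.
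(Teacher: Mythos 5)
Your argument for part (i) is correct and is essentially the paper's: Lemma \ref{lem:1divisor} gives that $G_{\lambdab}$ is connected with $a_{G_{\lambdab}}(E_i)\leqslant 1$ for every rupture or dicritical $E_i\leqslant G_{\lambdab}$, any vertex of $G_{\lambdab}$ of valence $\geqslant 3$ would be a rupture component violating this bound, so $G_{\lambdab}$ is a chain, and a chain has at most two ends, hence at most two rupture or dicritical components.

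For (ii) and (iii), however, your route -- counting supporting hyperplanes through $\lambdab$ -- has a genuine gap. The key step ``any extra rupture/dicritical end would force $\lambdab$ to lie on a further hyperplane, hence not be interior'' is not valid: two distinct exceptional components of $G_{\lambdab}$ may be associated with the \emph{same} supporting hyperplane (the paper stresses this right after introducing the hyperplanes $V_{j,\ell}$, and Lemma \ref{lem:2rdiv} is exactly the situation of two divisors $E_j,E_\ell$ with proportional data $e_{i,\ell}/e_{i,j}=(k_\ell+1)/(k_j+1)$ giving a single hyperplane and a single $\cC$-facet). So for an interior point $\lambdab$ of a facet that meets another facet, your argument cannot exclude a second rupture/dicritical end lying on the same hyperplane -- which is precisely what (ii) asserts cannot happen; as written, your proof of (ii) never substantively uses the existence of the second $\cC$-facet, and it would not distinguish case (ii) from case (i). The paper's mechanism is different: it works at the intersection point of $\cC_1$ and $\cC_2$, where the minimal jumping divisor decomposes as $G=G_1+G_2+G'$ with $G_1,G_2$ the minimal jumping divisors of the two facets (this containment comes from \cite[Theorem 3.3]{ACAMDC16}, not merely from ``lying on the hyperplane''); since the facets are supported on hyperplanes of different slopes, $G_1$ and $G_2$ share no component, and then connectedness of $G$ plus the bound $a_{G}(E_i)\leqslant 1$ for rupture/dicritical components (Lemma \ref{lem:1divisor} applied at the intersection point, where the multiplicity is again one) forces each $G_i$ to be a single rupture or dicritical divisor; (ii) then follows because $G_1$ is the minimal jumping divisor of \emph{every} interior point of $\cC_1$ (\cite[Lemma 4.6]{ACAMDC16}), and (iii) follows at once. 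Relatedly, in your (iii) you assume both that each facet is ``defined by'' a single rupture/dicritical divisor (which is essentially (ii)) and that these divisors belong to $G_{\lambdab}$; membership in $G_{\lambdab}$ requires the equality with the specific constant $e_j^{(1-\varepsilon)\lambdab}$ from Definition \ref{def:minimal}, so you should invoke the decomposition $G_{\lambdab}=G_1+G_2+G'$ rather than the hyperplane condition alone. To repair your approach you would need to add precisely this analysis at the facet intersection; the hyperplane count at $\lambdab$ alone does not suffice.
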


\begin{proof}
Let $\lambdab \in \bR^r_{\geqslant 0}$ be a jumping point. By \cite[Theorem 4.14]{ACAMDC16},  the ends of the connected
components of the minimal jumping divisor $G_{\lambdab}$ over the dual graph are either rupture or dicritical divisors.
If we assume $m(\lambdab)=1$, then, using Lemma \ref{lem:1divisor}, we have that  $a_{G_{\lambdab}}(E_j)\leqslant 1$
for any rupture or dicritical divisor $E_j$. Therefore either $G_{\lambdab}$ is just one exceptional component or it
is connected with just two ends which are rupture or dicritical divisors in the dual graph.
In particular,  {\rm i)} follows.

\vskip 2mm

Now assume that $\lambdab$ is at the intersection of two $\cC$-facets $\cC_1$ and $\cC_2$
with associated minimal jumping divisors $G_1$ and $G_2$ respectively. We
have $G_{\lambdab}=G_1+G_2+G'$ for some divisor $G'$ with exceptional support. Moreover,
the jumping points in the interior of $\cC_1$ and $\cC_2$ have multiplicity $1$ so
the same properties considered above also apply for $G_1$ and $G_2$. The two
$\cC$-facets are supported on different hyperplanes with different slope, so  $G_1$ and $G_2$ do not share
any exceptional divisor. By Lemma \ref{lem:1divisor}, this forces $G_1$ and $G_2$ to be just one exceptional component
being a rupture or dicritical divisor and the minimal jumping divisor
$G_{\lambdab}$ contains exactly two dicritical or rupture divisors. Thus,  {\rm ii)} and {\rm iii)} follow.
\end{proof}


\begin{remark}
Given a tuple of $\fM$-primary ideals $\fab=(\fa_1,\ldots,\fa_r )\subseteq (\cO_{X,O})^r$ we may pick  a subfamily
$\fa'=\{\fa_{i_1}, \dots , \fa_{i_k} \hskip 2mm | \hskip 2mm 1\leq i_1 < \cdots < i_k \leq r  \}$ and, if no confusion arise, we may view
it either as a tuple $ (\cO_{X,O})^s$ or a subtuple  of $\fab$ in $(\cO_{X,O})^r$ in the obvious way. Notice for example that
the Newton nest of $\fab'$   is a subset of the Newton nest of $\fab$.
 In the case that $\lambdab=(\lambda_1, \dots , \lambda_r) \in \bR^r_{\geqslant 0}$ is a jumping point contained in a coordinate hyperplane, we may consider the tuple  $\fab'=(\fa_i \hskip 2mm | \hskip 2mm \lambda_i \neq 0)$. Corollary \ref{cor:Cfacet}
 holds whenever we consider $\lambdab$ as a jumping point for $\fab'$ and thus, a point not in the coordinate
 hyperplanes of the lower dimensional positive orthant.

\end{remark}

\vskip 2mm

Notice that Corollary \ref{cor:Cfacet} already singles out a very particular case where we may not have our desired one-to-one correspondence.
Namely, assume that the log-canonical wall has a unique $\cC$-facet with points of multiplicity one. Part ${\rm i)}$ of Corollary \ref{cor:Cfacet}
says that the Newton nest contains either one or two divisors. Therefore the desired one-to-one correspondence fails when we have exactly two divisors and this case can indeed be achieved. Recall that the effective divisors $F_i$ such that
$\fa_i\cdot\cO_{X'} = \cO_{X'}\left(-F_i\right)$  are of the form  $F_i=\sum_{j=1}^s e_{i,j} E_j$   for  $ i=1,\dots,r$ and the relative canonical divisor is $ K_{\pi}=\sum_{i=1}^s k_j E_j $. Let $E_j$ and $E_{\ell}$ be the divisors in the Newton nest and
$V_{j,1}:\hskip 1mm e_{1,j} z_1+ \cdots +  e_{r,j} z_r=  k_j +1 $ and  $  V_{\ell,1}: \hskip 1mm e_{1,\ell} z_1+ \cdots +  e_{r,\ell} z_r=  k_\ell +1$ be their associated hyperplanes. The numerical conditions for which these hyperplanes support the unique $\cC$-facet of the  log-canonical wall are 
\[\frac{e_{1,\ell}}{e_{1,j}}=\cdots=\frac{e_{r, \ell}}{e_{r,j}}=\frac{k_\ell+1}{k_j+1}\,.\]
This result can be reformulated in the following



\vskip 2mm

\begin{lemma}\label{lem:2rdiv}

Let
$\fab=\left(\fa_1,\ldots,\fa_r\right)\subseteq \left(\cO_{X,O}\right)^r$ be a tuple of $\fM$-primary ideals
where $X$ is a complex surface with a rational singularity at $O \in X$. Assume that all the points in the log-canonical wall have multiplicity one and the Newton nest contains two divisors $E_j$ and $E_{\ell}$.
Then, the log-canonical wall has a unique $\cC$-facet if and only if
\[\frac{e_{1,\ell}}{e_{1,j}}=\cdots=\frac{e_{r, \ell}}{e_{r,j}}=\frac{k_\ell+1}{k_j+1}\,.\]
%
\end{lemma}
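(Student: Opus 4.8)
The plan is to reduce the statement to the equality of the two supporting hyperplanes $V_{j,1}$ and $V_{\ell,1}$, which is then a one-line computation.

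First I would record the linear-algebra fact. Since the $\fa_i$ are $\fM$-primary, the divisors $F_i$ have strictly positive coefficients along every exceptional component, so all $e_{i,j},e_{i,\ell}$ are positive; and since $V_{j,1},V_{\ell,1}$ each contain a point of the log-canonical wall lying in the open positive orthant, they avoid the origin and $k_j+1,k_\ell+1>0$. Two such hyperplanes $\sum_i e_{i,j}z_i=k_j+1$ and $\sum_i e_{i,\ell}z_i=k_\ell+1$ coincide if and only if there is a scalar $t>0$ with $e_{i,\ell}=t\,e_{i,j}$ for every $i$ and $k_\ell+1=t(k_j+1)$; reading off $t$ from these equalities yields precisely
\[\frac{e_{1,\ell}}{e_{1,j}}=\cdots=\frac{e_{r,\ell}}{e_{r,j}}=\frac{k_\ell+1}{k_j+1}.\]
So it remains to prove that the log-canonical wall has a unique $\cC$-facet if and only if $V_{j,1}=V_{\ell,1}$.

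To see this I would combine Corollary \ref{cor:Cfacet} with the structure of the log-canonical wall. By \cite[Theorem 3.3]{ACAMDC16} this wall bounds the interior of a rational convex polytope, hence is connected and is covered by its $\cC$-facets. Since every point on it has multiplicity one, Lemma \ref{lem:1divisor} shows that for $\lambdab$ interior to any $\cC$-facet the minimal jumping divisor $G_{\lambdab}$ is connected and has its only rupture or dicritical components at its (at most two) ends; by \cite[Theorem 4.14]{ACAMDC16} those ends are rupture or dicritical divisors, and, being attached to a $\cC$-facet of the log-canonical wall, they lie in the Newton nest (see \cite{ACAMDC16}), hence in $\{E_j,E_\ell\}$. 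Thus every $\cC$-facet of the wall lies on $V_{j,1}$ or on $V_{\ell,1}$; moreover both hyperplanes genuinely occur, because $E_j$ and $E_\ell$ are in the Newton nest and so each is attached, via the divisors $G'_{\pmb{z}_i}$, to a supporting hyperplane of the wall, namely $V_{j,1}$ respectively $V_{\ell,1}$. If the wall is a single $\cC$-facet it is contained in one hyperplane, and — since the two hyperplanes just exhibited both occur — this forces $V_{j,1}=V_{\ell,1}$; conversely, if $V_{j,1}=V_{\ell,1}$, then the wall lies on this single hyperplane, a convex polytope has only one facet on it, and multiplicity one (via Lemma \ref{lem:1divisor}, Corollary \ref{cor:Cfacet} and \cite[Lemma 4.6]{ACAMDC16}) prevents that facet from breaking into several $\cC$-facets. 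Combining with the first paragraph gives the lemma.

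The hard part will be the converse direction: once $V_{j,1}=V_{\ell,1}$, one must genuinely rule out that the single facet of the polytope on this hyperplane decomposes into several $\cC$-facets carrying distinct minimal jumping divisors — a priori $E_j$, $E_\ell$, or the chain joining them inside $\Gamma'_{\fab}$. This is exactly where the multiplicity-one hypothesis is indispensable, and controlling it requires the description of the minimal jumping divisors along the log-canonical wall and of the antinef closure of $\lfloor(1-\varepsilon)(\lambda_1F_1+\cdots+\lambda_rF_r)-K_\pi\rfloor$ developed in \cite{ACAMDC16}, together with Corollary \ref{cor:Cfacet}; everything else in the argument is the elementary computation above.
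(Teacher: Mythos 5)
Your reduction of the numerical condition to the equality of the two hyperplanes $V_{j,1}=V_{\ell,1}$ is the same elementary observation the paper relies on, but the reduction itself — ``unique $\cC$-facet $\Longleftrightarrow$ $V_{j,1}=V_{\ell,1}$'' — is exactly where your argument has genuine gaps. First, you justify that ``both hyperplanes genuinely occur'' by saying that $E_j$ and $E_\ell$, being in the Newton nest, are attached via the divisors $G'_{\pmb{z}_i}$ to supporting hyperplanes of the log-canonical wall. That misreads Definition \ref{Newton}: a Newton-nest divisor is only required to be a rupture or dicritical divisor lying on the minimal connected subgraph $\Gamma_{\fab}'$ \emph{containing} the $G'_{\pmb{z}_i}$; it need not belong to any $G'_{\pmb{z}_i}$, so its hyperplane is not automatically a supporting hyperplane of the wall. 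Proving that both Newton-nest divisors actually cut out the wall is the nontrivial content here, not a consequence of the definition. The same issue affects your claim that the rupture or dicritical ends of the minimal jumping divisor of an arbitrary $\cC$-facet of the wall ``lie in the Newton nest'': this is part of what the correspondence (Theorem \ref{thm:Pi}) establishes, and the paper only proves it when there are at least two $\cC$-facets, so it cannot be quoted as known in the one-facet situation you are analyzing. Second, you explicitly leave the converse direction ($V_{j,1}=V_{\ell,1}\Rightarrow$ unique $\cC$-facet, in particular ruling out that the single geometric facet splits into several $\cC$-facets with distinct constancy regions) as ``the hard part'' to be done with tools from \cite{ACAMDC16}; as written, that direction is not proved at all.

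For comparison, the paper obtains the lemma much more directly from the multiplicity-one hypothesis: by Corollary \ref{cor:Cfacet} i), an interior point $\lambdab$ of the unique $\cC$-facet has a connected minimal jumping divisor $G_{\lambdab}$ whose only rupture or dicritical components are its (at most two) ends, and in this situation these are identified with the divisors of the Newton nest; since every component of $G_{\lambdab}$ satisfies its supporting-hyperplane equation along the facet, both $V_{j,1}$ and $V_{\ell,1}$ contain the facet, hence support it, and two such hyperplanes coincide precisely when
\[
\frac{e_{1,\ell}}{e_{1,j}}=\cdots=\frac{e_{r,\ell}}{e_{r,j}}=\frac{k_\ell+1}{k_j+1}\,.
\]
So the route goes through the minimal jumping divisor of the facet (Lemma \ref{lem:1divisor} and Corollary \ref{cor:Cfacet}), not through the $G'_{\pmb{z}_i}$. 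If you want to complete your write-up, you need (a) an argument that in the unique-facet, multiplicity-one case the two Newton-nest divisors are exactly the rupture/dicritical components of the facet's minimal jumping divisor, and (b) an actual proof, not a pointer to tools, that equality of the two hyperplanes forbids the wall from carrying facets on other hyperplanes or splitting into several $\cC$-facets.
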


\vskip 2mm

We illustrate this case with the following

\begin{example}

Consider a smooth surface $X$ and a tuple of ideals $\fab=\{\fa_1,\fa_2\}$ such that they have a minimal log-resolution with the following vertex ordering

\begin{center}

   \begin{tikzpicture}[scale=0.9]
   \draw  (-1,0) -- (3,0);
   \draw  (2,0) -- (3,1);
   \draw  (2,0) -- (3,-1);
   \draw  (3,1) -- (5,1);
   \draw  (3,-1) -- (5,-1);
   \draw  (-1,0) -- (-2,-1);
   \draw  (-1,0) -- (-2,1);
   \draw  (-2,-1) -- (-3,-1);
   \draw  (-2,1) -- (-4,1);
   \draw [dashed,->,thick,red] (4,1) -- (5,0);
   \draw [dashed,->,thick,blue] (3,-1) -- (4,0);
   \draw [dashed,->,thick,red] (-2,1) -- (-3,0.33);
   \draw [dashed,->,thick,blue] (-2,1) -- (-3,0);
   \draw (-.2,-0.3) node {{\small $E_1$}};
   \draw (2.8,-0.3) node {{\small $E_2$}};
   \draw (0.8,-0.3) node {{\small $E_3$}};
   \draw (1.8,-0.3) node {{\small $E_4$}};
   \draw (4.8,-1.3) node {{\small $E_5$}};
   \draw (3.8,-1.3) node {{\small $E_6$}};
   \draw (2.8,-1.3) node {{\small $E_7$}};
   \draw (4.8,1.3) node {{\small $E_8$}};
   \draw (2.8,1.3) node {{\small $E_9$}};
   \draw (3.8,1.3) node {{\small $E_{10}$}};
   \draw (-3.2,-1.3) node {{\small $E_{11}$}};
   \draw (-2.2,-1.3) node {{\small $E_{12}$}};
   \draw (-1.5,0) node {{\small $E_{13}$}};
   \draw (-4.2,1.3) node {{\small $E_{14}$}};
   \draw (-3.2,1.3) node {{\small $E_{15}$}};
   \draw (-2.2,1.3) node {{\small $E_{16}$}};
   \filldraw  (0,0) circle (2pt)
              (1,0) circle (2pt)
              (2,0) circle (2pt)
              (3,0) circle (2pt)
              (3,1) circle (2pt)
              (5,1) circle (2pt)
              (4,-1) circle (2pt)
              (5,-1) circle (2pt)
              (-1,0) circle (2pt)
              (-2,-1) circle (2pt)
              (-3,-1) circle (2pt)
              (-3,1) circle (2pt)
              (-4,1) circle (2pt);
   \filldraw  [fill=white]  (-2,1) circle (3pt)
                            (4,1) circle (3pt)
                            (3,-1) circle (3pt);
   \end{tikzpicture}

   \end{center}

\noindent and the divisors given by the ideals are:

\vskip 2mm

$\cdot$ $F_1= (18,24,45,72,73,146,218,72,144,216,21,42,63,64,128,194)$, and

$\cdot$ $F_2= (18,24,45,72,72,144,216,74,147,222,21,42,63,64,128,194)$.

\vskip 2mm

\noindent with $K_\pi=(1,2,4,7,8,16,24,8,16,25,2,4,6,7,14,21)$.

\vskip 2mm

The divisors in the Newton nest are $E_4$ and $E_{13}$ and the log-canonical wall only has a unique $\cC$-facet whose
supporting hyperplane has the following equation: \[ V_{4,1}: 72x_1+72x_2=8 \text{, or equivalently,  }  V_{13,1}: 63x_1+63x_2=7\,.\]
\end{example}

The main result of this section is that the one-to-one correspondence established by Cassou-Nogu\`es and Libgober  in \cite[Theorem 4.22]{CNL14} still holds in our setup except for this very particular case where we may have two divisors in the Newton nest and just a unique $\cC$-facet.

\begin{theorem} \label{thm:Pi} Let
$\fab=\left(\fa_1,\ldots,\fa_r\right)\subseteq \left(\cO_{X,O}\right)^r$ be a tuple of $\fM$-primary ideals
where $X$ is a complex surface with a rational singularity at $O \in X$. Assume that the log-canonical wall has at least two $\cC$-facets and all its points
have multiplicity one.
Then, there is a one-to-one correspondence between the exceptional divisors
 in the Newton nest of $\fab$  and the $\cC$-facets of the log-canonical wall.
\end{theorem}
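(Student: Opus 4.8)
The strategy is to construct the correspondence by attaching to each $\cC$-facet of the log-canonical wall its minimal jumping divisor, and then to identify the divisors obtained in this way with the Newton nest. First I would fix the geometry: by \cite[Theorem 3.3]{ACAMDC16} the region $\cR_{\fab}({\bf 0})$ is the interior of a rational convex polytope, so the log-canonical wall is connected, and since by hypothesis it carries at least two $\cC$-facets, every $\cC$-facet meets another one. (This hypothesis is precisely what rules out the pathology of Lemma~\ref{lem:2rdiv}.) For a jumping point $\lambdab$ in the relative interior of a $\cC$-facet $\sigma$ --- reducing to the subtuple $(\fa_i : \lambda_i \neq 0)$ via the Remark above when $\lambdab$ lies on a coordinate hyperplane --- Corollary~\ref{cor:Cfacet}(ii) shows that the minimal jumping divisor $G_\sigma$ is a single rupture or dicritical divisor $E_{j(\sigma)}$. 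Since $(1-\varepsilon)\lambdab \in \cR_{\fab}({\bf 0})$ for small $\varepsilon$, one has $\J(\fab^{(1-\varepsilon)\lambdab}) = \J(\fab^{\bf 0})$ and the antinef divisor $D_{(1-\varepsilon)\lambdab}$ equals $D_{\bf 0}$; hence $G_\sigma$ is the reduced sum of those $E_j$ with $\lambdab \in V_{j,1}$, and in particular $\sigma \subseteq V_{j(\sigma),1}$. This produces a map $\Phi$ from the set of $\cC$-facets of the log-canonical wall to the set of rupture and dicritical divisors, $\sigma \mapsto E_{j(\sigma)}$.

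Next I would check that $\Phi$ is injective. If two distinct $\cC$-facets $\sigma \neq \sigma'$ satisfied $E_{j(\sigma)} = E_{j(\sigma')} = E_j$, then both would be contained in one and the same facet $V_{j,1} \cap \partial\cR_{\fab}({\bf 0})$ of the polytope; in fact every $\cC$-facet contained in that polytope facet has a single-divisor minimal jumping divisor, necessarily equal to $E_j$ by the previous paragraph. Choosing two such $\cC$-facets that are adjacent and a generic point $\mu$ of their common face, $\mu$ is a jumping point of multiplicity one lying at the intersection of exactly these two $\cC$-facets, so Corollary~\ref{cor:Cfacet}(iii) forces its minimal jumping divisor to contain two \emph{distinct} rupture or dicritical divisors, namely their two single components --- contradicting $E_{j(\sigma)} = E_{j(\sigma')}$. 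Hence each polytope facet $V_{j,1} \cap \partial\cR_{\fab}({\bf 0})$ carries exactly one $\cC$-facet, and $\Phi$ is injective.

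It then remains to identify the image of $\Phi$ with the Newton nest. Travelling along the connected log-canonical wall from one coordinate axis to another one meets a chain $\sigma_0, \dots, \sigma_m$ of $\cC$-facets, and at each intersection $\sigma_t \cap \sigma_{t+1}$ Corollary~\ref{cor:Cfacet}(iii) exhibits the minimal jumping divisor as a connected path in the dual graph joining $E_{j(\sigma_t)}$ to $E_{j(\sigma_{t+1})}$. Using that $\cR_{\fab}({\bf 0})$ is convex, the supporting hyperplanes vary monotonically in slope along the wall, which I would translate into the statement that the associated walk through the dual tree is non-backtracking; the union of the $E_{j(\sigma_t)}$ and of the connecting paths is then precisely the tree-geodesic between the divisors reached at the two axis endpoints. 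At an axis point $\pmb{z}_i$ (reducing to the single ideal $\fa_i$ as in the Remark) the minimal jumping divisor contains $G'_{\pmb{z}_i}$, so running over all pairs of axes the union of all $\cC$-facet divisors and connecting paths is exactly the minimal connected subgraph $\Gamma'_{\fab}$ containing every $G'_{\pmb{z}_i}$. The rupture and dicritical divisors occurring in it --- which is exactly the image of $\Phi$ --- therefore coincide with the Newton nest, and together with injectivity this yields the asserted one-to-one correspondence.

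The step I expect to be the main obstacle is the monotonicity claim in the last paragraph: one must convert the convex-geometric ordering of the supporting hyperplanes of $\cR_{\fab}({\bf 0})$ along its boundary into the combinatorial fact that the associated walk in the dual tree never backtracks, which will require a careful analysis of the defining inequalities of the region together with the unloading procedure of \cite{ACAMDC16}. A secondary technical point is making the reductions to subtuples on the coordinate hyperplanes (via the Remark) completely rigorous, in particular the identification at $\pmb{z}_i$ of $G'_{\pmb{z}_i}$ with the data read off from the single ideal $\fa_i$.
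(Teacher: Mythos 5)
Your overall strategy coincides with the paper's: attach to each $\cC$-facet of the log-canonical wall the single rupture or dicritical divisor provided by Corollary \ref{cor:Cfacet}, link consecutive facets through the connected minimal jumping divisors at their intersections, and recognize the resulting configuration as the Newton nest. The genuine gap is precisely the step you defer: the ``non-backtracking'' of the walk in the dual graph. Your proposed route (convexity of $\cR_{\fab}({\bf 0})$ and monotone variation of the slopes of the supporting hyperplanes) is left unproven, and for general $r$ it is not even clear what ``travelling along the wall from one axis to another'' means, since the wall is an $(r-1)$-dimensional polyhedral complex. In fact no convex-geometric input is needed: by Lemma \ref{lem:1divisor} and Corollary \ref{cor:Cfacet}(iii), the minimal jumping divisor at the intersection of two consecutive $\cC$-facets is a connected subtree of the dual graph whose only rupture or dicritical components are its two ends $E_{j_i},E_{j_{i+1}}$. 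Since the dual graph is a tree, if two consecutive connecting segments shared more than the common end $E_{j_{i+1}}$, the vertex where they diverge would meet at least three exceptional components and hence be a rupture component lying in the interior of one of the segments, which is impossible; a locally non-backtracking walk in a tree is a path, so the $E_{j_i}$ sit, in order and with no other rupture or dicritical divisor between consecutive ones, on the geodesic joining $E_{j_1}$ to $E_{j_l}$, i.e.\ on the subgraph $\Gamma_{\fab}'$ defining the Newton nest. This is the combinatorial fact the paper extracts from Corollary \ref{cor:Cfacet} in the case $r=2$.

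Two further points of divergence. For $r>2$ the paper does not argue directly on the wall as you do: it reduces to pairs of ideals by intersecting the picture with the plane through ${\bf 0}$, an interior point ${\bf c}$ of a $\cC$-facet and a rational point ${\bf q}$ of a coordinate hyperplane, which realizes the restriction as the mixed multiplier ideal diagram of the duple $(\fa_1,\fa_2^{dq_2}\cdots\fa_r^{dq_r})$, whose Newton nest sits inside that of $\fab$; varying ${\bf c}$ and ${\bf q}$ sweeps out the whole wall, and this is what makes the ``chain of facets between two axis points'' rigorous. Finally, be careful with your injectivity argument: if two distinct $\cC$-facets lay on one and the same supporting hyperplane, the point on their common face would be an intersection of two \emph{coplanar} $\cC$-facets, and the proof of Corollary \ref{cor:Cfacet}(iii) uses that the two facets are supported on hyperplanes with different slopes (so that $G_1$ and $G_2$ share no component); it therefore does not apply verbatim to that configuration, which must be excluded by a separate (if short) argument rather than by quoting the corollary.
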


\begin{proof}
We start with the case $r=2$, that is $\fab=\left(\fa_1,\fa_2\right)$.
%
%
In order to construct the Newton nest of $\fab$ we start considering
two points $\pmb{z}_1$ and $\pmb{z}_2$ in the coordinate axes corresponding to the log-canonical
thresholds of the ideals $\fa_1$ and $\fa_2$ respectively.
Then we order the $\cC$-facets of the log-canonical wall $\cC_1, \cC_2, \dots , \cC_l$
in such a way that $\pmb{z}_1 \in \cC_1$, $\pmb{z}_2 \in \cC_l$ and each $\cC$-facet $\cC_i$
intersects $\cC_{i-1}$ and $\cC_{i+1}$. Roughly speaking, we are considering a path from $\pmb{z}_1$ to $\pmb{z}_2$
over the log-canonical wall.
By Corollary \ref{cor:Cfacet}, the exceptional divisors associated to the supporting hyperplanes of
$\cC_1, \cC_2, \dots , \cC_l$ are unique, but they are also different because the hyperplanes
have different slopes. Therefore,
we can order these dicritical or rupture divisors
$E_{j_1}, E_{j_2}, \dots, E_{j_l}$ accordingly to their corresponding $\cC$-facets. Moreover,
Corollary \ref{cor:Cfacet}  implies that they form a path in the dual graph $\Gamma_{\fab}$ of the
log-resolution of $\fab$  with no dicritical or rupture divisors in between two consecutive $E_{j_i}$'s.

\vskip 2mm

Now, let $\Gamma_{\fab}' \subset \Gamma_{\fab}$ be the minimal connected subgraph containing $E_{j_1}$ and $E_{j_s}$.
Then we have that $E_{j_2}, \dots, E_{j_{s-1}}$  belong to $\Gamma_{\fab}'$
and the result follows because the Newton nest is the set  $\{E_{j_1}, E_{j_2}, \dots, E_{j_s}\}$ by construction.

\vskip 2mm

For the case $r>2$, that is $\fab=\left(\fa_1,\ldots,\fa_r\right)$, we  have to consider
points $\pmb{z}_1, \dots , \pmb{z}_r$ in the coordinate axes corresponding to the log-canonical
thresholds. We are going to pick one of them, say $\pmb{z}_1$, and a  $\cC$-facet $\cC_1$ containing this point $\pmb{z}_1$.
Let ${\bf c} \in \cC_1$ be an interior point with rational coordinates and ${\bf q}:=(0,q_2,\dots,q_r)$ a point in the coordinate hyperplane
$\{x_1=0 \}$ with rational coordinates. Notice that ${\bf q}$ corresponds to the mixed multiplier ideal
$\J(\fa_2^{q_2} \cdots \fa_r^{q_r})=\J\left((\fa_2^{dq_i}\cdots \fa_r^{dq_r})^{\frac{1}{d}}\right)$ for some $d\in\Z$ such that $dq_i\in\Z$ for $i=2,\ldots,r$.

\vskip 2mm

The mixed multiplier ideals appearing in the restriction of the positive orthant $\bR^r_{\geqslant 0}$
to the plane containing the points ${\bf c}$, ${\bf q}$ and the origin ${\bf 0}$  are the mixed
multiplier ideals of the duple $\fab'= ( \fa_1, \fa_2^{dq_2}\cdots \fa_r^{dq_r})$ so the facets of
the corresponding log-canonical wall are in one-to-one correspondence with the exceptional divisors
in the Newton nest of $\fab'$ which is contained in the Newton nest of $\fab$. Just moving the points
${\bf c}$ and ${\bf q}$ conveniently allows us to cover the whole log-canonical wall of $\fab$ and the result
follows. We point out that a log-resolution of $\fab$ is also a
log-resolution of $\fab'$.
\end{proof}

In the following example we show that with our definition of the Newton nest we may also
consider the case of non-simple ideals in a smooth surface which was not considered in \cite{CNL14}.

\begin{example} \label{ex:nest3}

Consider the tuple of ideals $\fab=(\fa_1,\fa_2,\fa_3)$  on a smooth surface $X$:

\vskip 2mm

$\cdot$ $\fa_1=( {y}^{3},{x}^{6}y,{x}^{8},{x}^{3}{y}^{2})$,

$\cdot$ $\fa_2=(x\left(x^2+x+y\right)^4,  \left( x^2+x+y \right)^2\left( x^2-x-y \right)^2, x^5 \left( x^2-x-y \right) ^2, x^5 \left( x^2+x+y \right) ^2,$

\hskip 1.3cm $x^3\left(x^2+x+y\right)\left( x^2-x-y \right)^2,$
$x^3 \left( x^2+x+y\right) ^3),$

$\cdot$ $\fa_3=($ $-x^6 \left( x-y \right),$ $x^7,$ $x^5
 \left( x-y \right) ^2,$ $-x^3 \left( x-y \right) ^3, x^
5 \left( x-y \right) ^2,$ $- \left( x-y \right) ^5,x^2 \left( x-y \right) ^4).$

\vskip 2mm

\noindent The dual graph of the log-resolution of $\fab$ is:

\hspace{1mm}
\begin{center}

\begin{tabular}{c}
\begin{tikzpicture}[scale=0.8]
   \draw  (0,0) -- (-1,0);
   \draw  (0,0) -- (1,1);
   \draw  (0,0) -- (1,-1);
   \draw  (4,1) -- (1,1);
   \draw  (4,-1) -- (1,-1);
   \draw  (-1,0) -- (-2,1);
   \draw  (-1,0) -- (-2,-1);
   \draw  (-2,1) -- (-3,1);
   \draw  (-2,-1) -- (-3,-1);
   \draw [dashed,->,thick,red] (3,1) -- (4,.5);
   \draw [dashed,->,thick,blue] (2,-1) -- (3,-.5);
   \draw [dashed,->,thick] (-2,-1) -- (-3,-.5);
   \draw [dashed,->,thick] (-2,1) -- (-3,.5);
   \draw (0.4,0) node {{ $E_1$}};
   \draw (0.7,1.4) node {{ $E_2$}};
   \draw (3.7,1.4) node {{ $E_3$}};
   \draw (1.7,1.4) node {{ $E_4$}};
   \draw (2.7,1.4) node {{ $E_5$}};
   \draw (-1.6,0) node {{ $E_{6}$}};
   \draw (-2.7,1.3) node {{ $E_{7}$}};
   \draw (-1.7,1.3) node {{ $E_{8}$}};
   \draw (-2.7,-1.3) node {{ $E_{9}$}};
   \draw (-1.7,-1.3) node {{ $E_{10}$}};
   \draw (3.7,-1.3) node {{ $E_{11}$}};
   \draw (2.7,-1.3) node {{ $E_{12}$}};
   \draw (.7,-1.3) node {{ $E_{13}$}};
   \draw (1.7,-1.3) node {{ $E_{14}$}};
   \filldraw  (1,1) circle (2pt)
              (2,1) circle (2pt)
              (4,1) circle (2pt)
              (1,-1) circle (2pt)
              (3,-1) circle (2pt)
              (4,-1) circle (2pt)
              (-3,1) circle (2pt)
              (-3,-1) circle (2pt);
   \filldraw  [fill=white]  (-2,1) circle (3pt)
                            (-2,-1) circle (3pt);
   \filldraw  [fill=magenta]  (3,1) circle (3pt);
   \filldraw  [fill=grey]  (0,0) circle (3pt);
   \filldraw  [fill=green]  (2,-1) circle (3pt);
   \filldraw  [fill=red]  (-1,0) circle (3pt);
   \end{tikzpicture}
\end{tabular}

\end{center}

\noindent For simplicity we denote the divisors associated with this resolution as

\vskip 2mm

$\cdot$ $F_1= (3,6,8,15,24,3,3,6,3,6,3,6,9,15)$,

$\cdot$ $F_2=(4,4,4,8,12,8,9,18,9,18,4,8,12,20)$,

$\cdot$ $F_3=(5,5,5,10,15,5,5,10,5,10,7,14,20,35)$.

\vskip 2mm

\noindent In the same manner, the relative canonical divisor is $K_{\pi}=(1,2,3,6,10,2,3,6,3,6,2,4,6,10)$.
The Newton nest of $\fab$ consists of the exceptional divisors $E_1, E_{5}, E_{6}$ and $E_{14}$ and they correspond, matching the colors, to the $\cC$-facets
of the log-canonical wall which is:

\begin{center}

\begin{tikzpicture}
\begin{axis}[view/h=120]
\addplot3 [patch, magenta]
table {
x y z

0.45833333333333 0 0
0.41025641025641 0.346153846153846 0
0.33333333333333 0 0.2
};
\addplot3 [patch, grey]
table {
x y z

0.41025641025641 0.346153846153846 0
0.33333333333333 0 0.2
0 .25 .2
};
\addplot3 [patch, red]
table {
x y z

0 0.375 0
0.41025641025641 0.346153846153846 0
0 .25 .2
};
\addplot3 [patch, green]
table {
x y z

0 0 0.342857142857143
0.33333333333333 0 0.2
0 .25 .2
};
\end{axis}
\end{tikzpicture}
\end{center}


%

\end{example}

In the following example we present a case where $X$ has a log-canonical singularity
and Theorem \ref{thm:Pi} does not hold. This shows how sharp is the condition of having
points in the log-canonical wall with multiplicity one.

\begin{example}\label{ex:Sing_racional_2}

Consider a surface $X$  with a rational singularity at $O$ whose
minimal resolution $\pi:X' \lra X$ has six exceptional components $E_1,\dots, E_6$
with the following dual graph and intersection matrix:

\begin{center}

   \begin{tikzpicture}[scale=0.9]
   \draw  (0,0) -- (3,0);
   \draw  (1,0) -- (0,1);
   \draw  (2,0) -- (3,1);
   \draw [dashed,->,thick] (0,0) -- (-1,1);
   \draw [dashed,->,thick] (0,0) -- (-1.2,0.67);
   \draw [dashed,->,thick] (0,0) -- (-1.3,0.33);
   \draw [dashed,->,thick] (0,0) -- (-1.4,0);
   \draw [dashed,->,thick] (3,0) -- (4,1);
   \draw [dashed,->,thick] (3,0) -- (4.4,0);
   \draw [dashed,->,thick] (0,1) -- (-1.2,1.67);
   \draw [dashed,->,thick] (1,0) -- (0.5,1.1);
   \draw [dotted,thick] (3.75,0) arc [radius=.75, start angle=0, end angle= 45];
   \draw (1.8,-0.3) node {{\small $E_1$}};
   \draw (0.8,-0.3) node {{\small $E_2$}};
   \draw (2.8,1.3) node {{\small $E_3$}};
   \draw (2.8,-0.3) node {{\small $E_4$}};
   \draw (0,1.3) node {{\small $E_5$}};
   \draw (-.2,-0.3) node {{\small $E_6$}};
   \draw (3.9,0.3) node {{\tiny $12$}};
   \filldraw  (3,1) circle (2pt);
   \filldraw  [fill=white]  (0,0) circle (3pt)
                            (1,0) circle (3pt)
                            (2,0) circle (3pt)
                            (0,1) circle (3pt)
                (3,0) circle (3pt);
   \end{tikzpicture}\\ \parbox{30mm}{\begin{center} Vertex ordering\end{center}}\\

   \end{center}

  $$M=(E_i\cdot E_j)_{1\leqslant i,j\leqslant 6}={
  \left( \begin {array}{cccccc} -2&1&1&1&0&0\\ \noalign{\medskip}1&-3&0&0&1&1\\ \noalign{\medskip}1&0&-1&0&0&0\\
\noalign{\medskip}1&0&0&-3&0 &0\\ \noalign{\medskip}0&1&0&0&-3&0\\
\noalign{\medskip}0&1&0&0&0&-6
\end {array} \right)}.$$

\vskip 2mm

The fundamental cycle is the divisor $Z = (3, 2, 3, 1, 1, 1)$ and the relative canonical divisor is
$K_\pi=\left(-\frac{1}{2},-1,\frac{1}{2},-\frac{1}{2},-\frac{2}{3},-\frac{5}{6}\right)$ so the singularity is log-canonical. Then we consider a
duple of ideals $\fab=(\A_1,\A_2)$, with $\A_1$ non singular 
and $\A_2=\m$ given by the  divisors  $F_1=(15, 6, 15, 9, 2, 1)$ and
$F_2=Z = (3, 2, 3, 1, 1, 1)$. The log-canonical wall has two $\cC$-facets and the corresponding mixed multiplier ideals are different (see
Figure 2).
In particular we have jumping points on the log-canonical wall with multiplicity bigger that $1$. In this case the Newton nest consists of the exceptional divisors
$E_1, E_2$ and $E_4$ so we no longer have
the bijection  given in Theorem \ref{thm:Pi}.

\begin{figure}[ht!!!]
  \begin{center}
  \medskip
    \includegraphics[width=.40\textwidth]{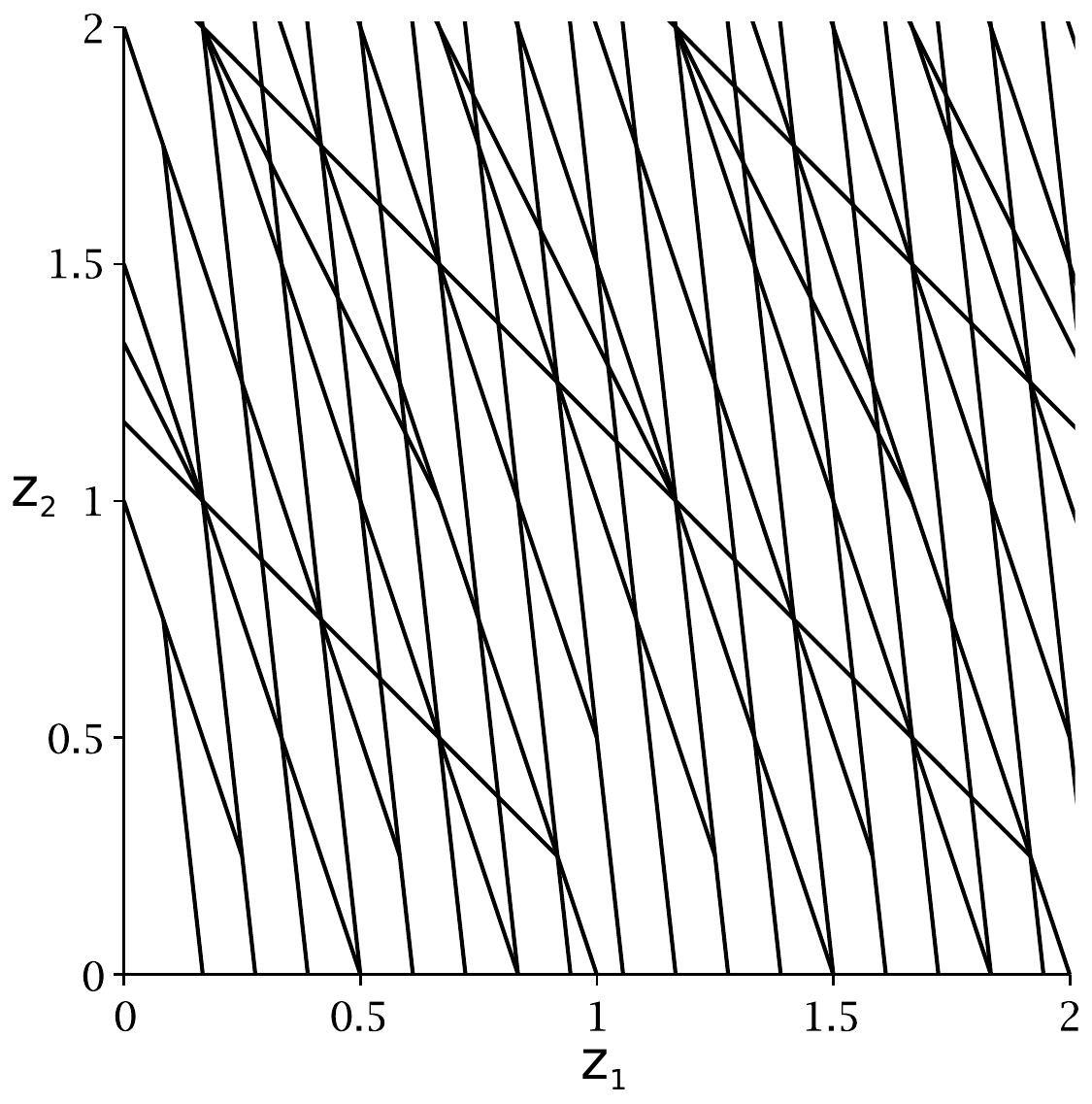}
  \end{center}
\caption{Constancy regions of the the mixed multiplier ideals of $\fab$. }
\end{figure}
\end{example}

\end{document}